\def\UseSection{
      \numberwithin{equation}{section}
	\theoremstyle{plain}
      \newtheorem{theorem}    {Theorem}[section]
      \DefineTheorems 
}
\def\DefineTheorems{
	
	\newtheorem{lemma}      [theorem] {Lemma}
	
	\newtheorem{prop}       [theorem] {Proposition}
	
	\newtheorem{cor}        [theorem] {Corollary}

	\theoremstyle{definition}
	\newtheorem{defn}       [theorem] {Definition}

	\theoremstyle{definition}

}
\newcommand{\bt}   {\begin{theorem}}
\newcommand{\et}   {\end  {theorem}}
\newcommand{\bl}   {\begin{lemma}}
\newcommand{\el}   {\end  {lemma}}
\newcommand{\bp}   {\begin{prop}}
\newcommand{\ep}   {\end  {prop}}
\newcommand{\bc}   {\begin{cor}}
\newcommand{\ec}   {\end  {cor}}
\newcommand{\bd}   {\begin{defn}}
\newcommand{\ed}   {\end  {defn}}
\newcommand{\ba}   {\begin{array}}
\newcommand{\ea}   {\end  {array}}
\newcommand{\be}   {\begin{enumerate}}
\newcommand{\ee}   {\end  {enumerate}}
\newcommand{\bi}   {\begin{itemize}}
\newcommand{\ei}   {\end  {itemize}}
\def\eq#1\en{\begin{equation}#1\end{equation}}  
\def\eqsplit#1\ensplit{
	\begin{equation}\begin{split}#1\end{split}\end{equation}
	}
\def\eqalign#1\enalign{
	\begin{align}#1\end{align}
	}
\def\eqmul#1\enmul{
	\begin{multline}#1\end{multline}
	}
\newcommand{\eqarrstar} {\begin{eqnarray*}} 
\newcommand{\enarrstar} {\end{eqnarray*}} 
\newcommand{\eqarray}   {\begin{eqnarray}} 
\newcommand{\enarray}   {\end{eqnarray}} 
\newcommand{\labelcounter}[2]{{%
	\stepcounter{#1}
	\protected@write\@auxout{}%
	{\string\newlabel{#2}{{\csname the#1\endcsname}{\thepage}}}%
	{\ref{#2}}
	}}
\newcommand{\Zbold} {{\mathbb Z}}
\newcommand{\spose}[1] {{\hbox to 0pt{#1\hss}} }
\newcommand{\ltapprox} {\mathrel{\spose{\lower 3pt\hbox{$\mathchar"218$}}
\raise 2.0pt\hbox{$\mathchar"13C$}}}
\newcommand{\gtapprox} {\mathrel{\spose{\lower 3pt\hbox{$\mathchar"218$}}
\raise 2.0pt\hbox{$\mathchar"13E$}}}
\newtheorem{THM}{Theorem}[section]
\newtheorem{MOD}[THM]{Model}
\newtheorem{LEM}[THM]{Lemma}
\newtheorem{PRP}[THM]{Proposition}
\newtheorem{DEF}[THM]{Definition}
\newcommand{\ra}{\rightarrow}
\newcommand{\lra}{\leftrightarrow}
\renewcommand{\to}      {\rightarrow}
\newcounter{countC}  
\newcounter{countR}  
\newcommand{\re}{\mathbb{R}}
\newcommand{\Z}{\Zbold}
\newcommand{\mc}[1]{\mathcal{#1}}
\newcommand{\mP}{\mathbb{P}}
\newcommand{\mE}{\mathbb{E}}
\newcommand{\smallW}{\scriptstyle \leftarrow}
\newcommand{\smallN}{\scriptstyle \uparrow}
\newcommand{\smallnw}{\scriptscriptstyle \nwarrow}
\newcommand{\NE}{\begin{picture}(,)
\put(2,-5){$\rightarrow$}
\put(0,.5){$\uparrow$}
\end{picture}\hspace{.5cm}
}
\newcommand{\SW}{\begin{picture}(,)
\put(0,4.8){$\leftarrow$}
\put(8.2,-0.5){$\downarrow$}
\end{picture}\hspace{.5cm}
}
\newcommand{\smallOTSP}{\begin{picture}(,)
\put(6.2,0){$\smallN$}
\put(.3,-3.3){$\smallW$}
\put(1.7,0){$\smallnw$}
\end{picture}\hspace{.4cm}
}
\newcommand{\OTSP}{\begin{picture}(,)
\put(0.5,-5){$\leftarrow$}
\put(0.5,.2){$\nwarrow$}
\put(8.7,0.5){$\uparrow$}
\end{picture}\hspace{.5cm}
}
\newcommand{\blank}[1]{}
\newcommand{\barc}{\bar{\mathcal{C}}}
\begin{document}

\title  {Notes on oriented percolation}

\author[Holmes]{Mark Holmes}
\address{Department of Statistics, University of Auckland}
\email{mholmes@stat.auckland.ac.nz}
\author[Salisbury]{Thomas S. Salisbury} 
\address{Department of Mathematics and Statistics, York University}
\email{salt@yorku.ca}

\subjclass[2010]{60K35}

\begin{abstract}
These notes fill in results about oriented percolation that are required for the paper \cite{HS_DREII}. Since these are essentially modifications of results found in other sources (but adapted to the model we particularly need), there is no intention to publish these. 
\end{abstract}

\maketitle

\section{Introduction}
\label{sec:intro}
This section consists of notation and results pulled from the paper  \cite{HS_DREII} that are referred to in these notes. 

For fixed $d\ge 2$, let $\mc{E}=\{\pm e_i: i=1,\dots,d\}$ be the set of unit vectors in $\Z^d$, and let $\mc{P}$
denote the power set of $\mc{E}$.
Let $\mu$ be a probability measure on $\mc{P}$.  A {\em degenerate random environment} (DRE) is a random directed graph, 
i.e.~an element $\mc{G}=\{\mc{G}_x\}_{x\in \Z^d}$ of $\mc{P}^{\Z^d}$.  We equip $\mc{P}^{\Z^d}$ with the product $\sigma$-algebra and the product measure $\mP=\mu^{\otimes \Z^d}$, so that $\{\mc{G}_x\}_{x\in \Z^d}$ are i.i.d.~under $\mP$.  We denote the expectation of a random variable $Z$ with respect to $\mP$ by $\mE[Z]$. 

We say that the DRE is {\em $2$-valued} when $\mu$ charges exactly two points, i.e.~there exist distinct $E_1,E_2\in\mc{P}$ and $p \in (0,1)$ such that $\mu(\{E_1\})=p$ and $\mu(\{E_2\})=1-p$.  As in the percolation setting, there is a natural coupling of graphs for all values of $p$ as follows.  Let $\{U_x\}_{x \in \Z^d}$ be i.i.d.~standard uniform random variables under $\mP$.  Setting
\begin{equation}
\label{eq:coupling}
\mc{G}_x=\begin{cases}
E_1, &\text{ if }U_x<p,\\
E_2, &\text{otherwise,}
\end{cases}
\end{equation}
Our principal interest is the following
\begin{MOD} $(\NE\SW)$:
\label{exa:NE_SW}
Let $E_1=\{\uparrow,\rightarrow\}$ and $E_2=\{\downarrow,\leftarrow\}$ (and set $\mu(\{E_1\})=p$, $\mu(\{E_2\})=1-p$). 
\end{MOD}
We call the generalization to $d$ dimensions the {\it orthant model} (so this is the {\it 2-$d$ orthant model}).

\begin{DEF}
\label{def:connections}
Given an environment $\mc{G}$:
\begin{itemize}
\item We say that $x$ is {\em connected} to $y$, and write $x\ra y$ if: there exists an $n\ge 0$ and a sequence $x=x_0,x_1,\dots, x_n=y$ such that $x_{i+1}-x_{i}\in \mc{G}_{x_i}$ for $i=0,\dots,n-1$. We say that $x$ and $y$ are {\em mutually connected}, or that they {\em communicate}, and write $x\lra y$ if $x\ra y$ and $y \ra x$.  
\item  Define $\mc{C}_x=\{y\in \Z^d:x \ra y\}$ (the {\em forward} cluster), $\mc{B}_y=\{x\in\Z^d:x\ra y\}$ (the {\em backward} cluster), and $\mc{M}_x=\{y \in \Z^d:x\lra y\}=\mc{B}_x\cap\mc{C}_x$ (the {\em bi-connected} cluster). \newline
Set
$\theta_+=\mP(|\mc{C}_o|=\infty)$, $\theta_-=\mP(|\mc{B}_o|=\infty)$, and $\theta=\mP(|\mc{M}_o|=\infty)$.
\item A nearest neighbour path in $\Z^d$ is {\em open in $\mc{G}$} if that path consists of directed edges in $\mc{G}$.
\end{itemize}
\end{DEF}
For model \ref{exa:NE_SW} we have
\begin{equation}
\boxed{\theta_+=1}.
\end{equation}
Simulations indicate that $\mc{C}_o$ and infinite $\mc{B}_o$ clusters have similar geometry, except that $\mc{C}_o$ typically has ``holes" whereas $\mc{B}_o$ does not.  In order to give a clearer description of this weak kind of duality, we study the geometry of
$\barc_x\supset \mc{C}_x$, defined by
\begin{multline}
\label{eq:Cbar}
\barc_x=\{z \in \Z^d: \text{ every infinite nearest-neighbour self-avoiding} \\  \text{path starting at $z$ passes through $\mc{C}_x$}\}.
\end{multline}
An important notion that arises in the proofs of these results (and elsewhere throughout this paper) is the asymptotic slope of a path.  
\begin{DEF}
\label{def:slope}
A nearest-neighbour 
path $x_0,x_1,\dots$ with $x_i=(x_i^{[1]},x_i^{[2]})\in \Z^2$ is said to have \emph{asymptotic slope $\sigma$} if 
\[\lim_{n\ra \infty}\frac{x_n^{[2]}}{x_n^{[1]}}=\sigma.\]
\end{DEF}

In \cite{HS_DREII} we state without proof a number of results about the OTSP model $(\OTSP,\cdot)$ that follow using the methods of  \cite{Dur84} for two dimensional oriented percolation models. In this model we have local environment $\mathbf{G}_x=\OTSP$ with probability $p$, and $\mathbf{G}_x=\emptyset$ with probability $1-p$, both on the triangular lattice described above. Recall that forward clusters in this model are denoted $\mathbf{C}_x$, and backward clusters $\mathbf{B}_x$. The natural coupling (\ref{eq:coupling}) gives a probability space on which the sets $\mathbf{C}_o(p)$ are increasing in $p$ almost surely, so
\[\Theta_+(p)=\mP(|\mathbf{C}_o(p)|=\infty) \quad \text{ is increasing in }p,\]
giving the critical value $p_c^{\smallOTSP}=\inf\{p:\Theta_+(p)>0\}\in (0,1)$.  

In order to describe the shape of an infinite $\mathbf{C}_x$ cluster, define $w_n=\sup\{x:(-n,x)\in \mathbf{C}_o\}$ and $v_n=\inf\{x:(-n,x)\in \mathbf{C}_o\}$.  The following Proposition is proved using subadditivity of quantities related to $w_n$. Minor modifications arise from the proofs in \cite{Dur84}, because the latter treats oriented bond percolation on the square lattice, while we need oriented site percolation on the triangular lattice.
\begin{PRP}
\label{prp:OPslope}
For the percolation model $(\OTSP,\cdot)$ with $1>p>p_c^{\smallOTSP}$, there exists $\rho=\rho_p<-1$ such that almost surely on the event $\{|\mathbf{C}_o|=\infty\}$, the upper and lower boundaries of $\mathbf{C}_o$ have asymptotic slopes $\rho$ and $1/\rho$ respectively. In other words, $\frac{w_n}{-n}\ra \rho$ and $\frac{v_n}{-n}\ra 1/\rho$ almost surely as $n\ra \infty$.  
\end{PRP}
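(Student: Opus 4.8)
The plan is to follow Durrett's treatment of the right edge in two-dimensional oriented percolation \cite{Dur84}, carried over from oriented bond percolation on the square lattice to oriented site percolation on the triangular lattice, and then to read off the two boundary slopes together with the reciprocal relation between them. First I would pick a linear functional on $\Z^2$ that strictly increases along each of the three OTSP arrows $\leftarrow,\nwarrow,\uparrow$ (say $(x,y)\mapsto y-x$) and use it as a discrete time coordinate; the model is then a monotone oriented percolation, so FKG applies. Starting the growth from a suitable half-line through $o$, let $\overline w_n$ and $\overline v_n$ be the topmost and bottommost sites reached on the vertical line $\{x=-n\}$; a Borel--Cantelli estimate using $p<1$ shows these are a.s.\ finite. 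Since any open path reaching the top (resp.\ bottom) of $\{x=-(m+n)\}$ must cross $\{x=-m\}$ at or below $\overline w_m$ (resp.\ at or above $\overline v_m$), restarting the growth at that crossing point gives $\overline w_{m+n}\le\overline w_m+\overline w_n'$ and $\overline v_{m+n}\ge\overline v_m+\overline v_n'$ with independent copies built from the environment strictly to the left; the subadditive ergodic theorem then yields deterministic $\rho$ and $\beta$ with $\overline w_n/(-n)\to\rho$ and $\overline v_n/(-n)\to\beta$, a.s.\ and in $L^1$.

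To pass to $\mathbf C_o$ itself, containment of $\mathbf C_o$ in the half-line cluster gives $\liminf_n w_n/(-n)\ge\rho$ and $\limsup_n v_n/(-n)\le\beta$ (recall $w_n,v_n\ge 0$ and $-n<0$), and the standard restart argument supplies the reverse inequalities on $\{|\mathbf C_o|=\infty\}$: K\"onig's lemma gives an infinite open directed path out of $o$, which cannot stay in a vertical strip (inside $\{x\ge -M\}$ an infinite open path uses only finitely many $\leftarrow$ and $\nwarrow$ steps, so it is eventually a vertical ray --- an event of probability $0$), so $\mathbf C_o$ reaches every line $\{x=-n\}$ and, from some random regeneration point on, dominates a translate of a half-line cluster. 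A shift-invariance / $0$--$1$ argument identifies the limits with the constants above, so $w_n/(-n)\to\rho$ and $v_n/(-n)\to\beta$ a.s.\ on $\{|\mathbf C_o|=\infty\}$.

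There remain the identity $\beta=1/\rho$ and the inequality $\rho<-1$. For the former: the reflection $(x,y)\mapsto(-y,-x)$ permutes the arrows $\leftarrow$ and $\uparrow$ and fixes $\nwarrow$, hence preserves the law of the environment, and it sends $(-n,x)\mapsto(-x,n)$, so it interchanges ``topmost site on a vertical line'' with ``leftmost site on a horizontal line.'' Both describe the lower-left boundary ray of the asymptotic cone of $\mathbf C_o$, and writing that ray in its two parametrizations forces $\beta=1/\rho$ (in particular the cone is symmetric about the $\nwarrow$-direction). For the inequality: $\rho=-1$ would say the upper boundary ray coincides with the $\nwarrow$-direction, i.e.\ the cone degenerates to a single ray and the transverse spreading speed is $0$; for $p>p_c^{\smallOTSP}$ this is excluded by the block/renormalization estimate of \cite{Dur84}, since a supercritical oriented percolation spreads, on survival, in a cone of strictly positive opening angle about its drift direction. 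Finiteness of $\rho$ (equivalently, opening angle strictly below the deterministic maximum) uses $p<1$ and the exponential bound on the edge travelling at the maximal rate. Thus $\rho\in(-\infty,-1)$ and $\beta=1/\rho\in(-1,0)$.

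The main obstacle is the strict inequality $\rho<-1$. As in \cite{Dur84} it is equivalent to strict positivity of the transverse edge speed throughout the supercritical phase, and it rests on the contour / block-construction estimates showing that open sites occurring at positive density near the edge force linear transverse growth; transcribing those to oriented site percolation on the triangular lattice is exactly where the ``minor modifications'' of \cite{Dur84} are needed. Everything else is routine, the only nuisance being that a vertical slice of the cluster is a union of intervals --- because $\uparrow$-steps do not advance the chosen time coordinate --- rather than a single point.
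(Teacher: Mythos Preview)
Your overall plan is sound, but there is a real gap in the subadditive step when you slice by vertical lines $\{x=-n\}$. You claim the increments $\overline w_n'$ are ``built from the environment strictly to the left'' and hence independent of $\overline w_m$. That is false as stated: since $\uparrow$ keeps $x$ fixed, one reaches the topmost point of column $-m$ by climbing \emph{within} that column, so $\overline w_m$ depends on which sites of column $\{x=-m\}$ are open. The increment, starting from that same column, uses those sites too; neither the independence you assert nor the stationarity needed for Liggett's theorem comes for free. (A clean repair is to run the subadditive argument on \emph{entry points} --- the topmost $(-m,y)$ reached via $\leftarrow$ or $\nwarrow$ from column $-(m-1)$ --- which are measurable with respect to the environment in $\{x>-m\}$; the true $\overline w_m$ differs from this by an independent geometric overshoot.) Your closing remark that the $\uparrow$ step is merely a nuisance for the shape of slices understates the problem: it is exactly what breaks the Markov-in-$n$ structure Durrett's argument relies on.

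The paper takes a different route. It rotates so that the time coordinate is the diagonal $y-x$, which \emph{strictly} increases (by $1$ or $2$) at every step; then $\overline u_m$ genuinely depends only on the environment at times $<m$, and independence is clean. The price is that $\nwarrow$ now jumps time by $2$, which is handled by allowing starts from two adjacent diagonals --- the dual of your ``stay-put'' difficulty. This yields $u_n/n\to\alpha$ on diagonal slices, but the Proposition concerns vertical slices, so the bulk of the paper's proof is a nontrivial translation from $\alpha$ to $\rho=-(1+\alpha)/(1-\alpha)$, one direction of which requires the large-deviation bound $\mP(\overline u_n>\alpha' n)\le Ce^{-\gamma n}$. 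Your approach, once repaired, would bypass that translation entirely via the path-crossing identity $w_n=\overline w_n$ on survival, which is a genuine simplification. Your reflection argument for $\beta=1/\rho$ is correct and matches the paper; for the strict inequality $\rho<-1$ the paper uses a Russo-type strict-monotonicity argument for $\alpha[p]$ rather than renormalisation, but either route works.
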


Since $v_n$ is bounded below by a sum of independent Geometric$(1-p)$ random variables, we get the inequality $-\frac{p}{1-p}\le \rho_p$.  The following two additional Lemmas can be proved as in \cite{Dur84}.
\begin{LEM}
\label{lem:OPslope}
$\rho_p$ is continuous and strictly decreasing in $p> p_c^{\smallOTSP}$, with $\rho_p\uparrow -1$ as $p\downarrow p_c^{\smallOTSP}$.
\end{LEM}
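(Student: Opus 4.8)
The plan is to realise $-\rho_p$ as the asymptotic speed of an auxiliary ``edge process'' and then transfer to it the analysis of the right edge of two-dimensional oriented percolation from \cite{Dur84}. On the coupled probability space \eqref{eq:coupling}, let $\overline{w}_n$ be the analogue of $w_n$ for the process started with the entire half-axis $\{(0,y):y\le 0\}$ wet, namely $\overline{w}_n=\sup\{y\in\Z:\exists\,y_0\le 0,\ (0,y_0)\ra(-n,y)\}$. By attractiveness and translation invariance in the $e_2$-direction, $\overline{w}_{m+n}\le\overline{w}_m+\overline{w}'_n$ with $\overline{w}'_n$ an independent copy (up to a bounded boundary correction from the overlap of environments at the junction depth), so the subadditive ergodic theorem gives $\overline{w}_n/n\to\alpha(p)$ almost surely and in $L^1$, where $\alpha(p)=\lim_n n^{-1}\mE[\overline{w}_n(p)]$, for every $p\in(0,1)$; moreover $\alpha$ is an infimum of continuous functions of $p$, namely $\alpha(p)=\inf_n\bigl(n^{-1}\mE[\overline{w}_n(p)]+c\,n^{-1}\bigr)$ for a suitable constant $c$. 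Since $\overline{w}_n$ dominates the corresponding quantity for $\mathbf{C}_o$, the argument behind Proposition~\ref{prp:OPslope} identifies the two speeds on $\{|\mathbf{C}_o|=\infty\}$, so $\rho_p=-\alpha(p)$ for $p>p_c^{\smallOTSP}$. It then suffices to show $\alpha$ is continuous and strictly increasing on $(p_c^{\smallOTSP},1)$, with $\alpha(p)\downarrow 1$ as $p\downarrow p_c^{\smallOTSP}$.

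Monotonicity ($\alpha$ nondecreasing) is immediate from the coupling, since $\overline{w}_n(p)$ is nondecreasing in $p$. For continuity, each $p\mapsto\mE[\overline{w}_n(p)]$ is continuous (a locally uniformly convergent series of monomials in $p$), so $\alpha$ is an infimum of continuous functions, hence upper semicontinuous; together with monotonicity this gives right-continuity, and left-continuity I would obtain from the coupling argument of \cite{Dur84}: a ``good'' event forcing $\overline{w}_N(p)\ge(\alpha(p)-\varepsilon)N$ has positive probability and depends on finitely many sites, hence persists under a small decrease of $p$, and a renewal/block comparison then yields $\liminf_{q\uparrow p}\alpha(q)\ge\alpha(p)-2\varepsilon$. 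For strict monotonicity, fixing $p_c^{\smallOTSP}<p<p'<1$, I would decompose the edge process at its regeneration depths and locate order-$n$ many such depths at which, with probability bounded below, switching on one extra open site (open at level $p'$, closed at level $p$) strictly advances the $p'$-frontier past the $p$-frontier; a coupling then shows these advances accumulate rather than cancel, giving $\mE[\overline{w}_n(p')]-\mE[\overline{w}_n(p)]\ge c(p,p')\,n$ for large $n$ and hence $\alpha(p')>\alpha(p)$.

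For the limiting value, $\alpha(p)=-\rho_p>1$ for $p>p_c^{\smallOTSP}$ by Proposition~\ref{prp:OPslope}, so by right-continuity and monotonicity $\lim_{p\downarrow p_c^{\smallOTSP}}\alpha(p)=\alpha(p_c^{\smallOTSP})\ge 1$. For the reverse bound I would prove the implication $\alpha(p)>1\Rightarrow\Theta_+(p)>0$: the excess $\overline{w}_n-n$ of the frontier over the pure ``$\nwarrow$''-diagonal is produced by up-move excursions, and such an excursion can permanently raise the frontier only by spawning an infinite forward sub-cluster; if all these sub-clusters were finite, a renewal/ergodic estimate would force $\overline{w}_n-n=o(n)$. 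Since $\Theta_+(p_c^{\smallOTSP})=0$ (absence of percolation at criticality, which holds as in \cite{Dur84}; see also \cite{HS_DREII}), this gives $\alpha(p_c^{\smallOTSP})\le 1$, hence $\alpha(p_c^{\smallOTSP})=1$ and $\rho_p\uparrow-1$ as $p\downarrow p_c^{\smallOTSP}$.

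I expect the hard part to be (i) the strict-monotonicity step --- the issue is not finding a beneficial extra site but ensuring that its effect is genuinely additive over a linear-in-$n$ family of regeneration times rather than merely re-deriving $\alpha(p')\ge\alpha(p)$ --- and (ii) the criticality implication $\alpha(p)>1\Rightarrow\Theta_+(p)>0$; both are handled by the block-construction techniques of \cite{Dur84}, the only genuinely model-specific change being the passage from oriented bond percolation on $\Z^2$ to oriented site percolation on the triangular lattice, as already observed above.
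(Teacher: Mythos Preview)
Your overall strategy coincides with the paper's: define an edge speed for the half-line process by subadditivity, express $\rho_p$ in terms of it, and then establish strict monotonicity, continuity, and the critical value.  The paper carries this out after rotating the model through $3\pi/4$; in those coordinates the speed is denoted $\alpha[p]$, one has $\rho_p=-(1+\alpha[p])/(1-\alpha[p])$, and the target becomes $\alpha[p_c]=0$ rather than your $\alpha(p_c)=1$.  That difference is cosmetic.

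Where you diverge is in the two steps you yourself flag as hard, and in both cases your proposed mechanism is \emph{not} the one in \cite{Dur84} or in the paper.  For strict monotonicity the paper does not use regeneration depths or block construction.  It proves a pointwise comparison (Lemma~\ref{lem:addingpoints}): adjoining the origin to any infinite initial set below the axis raises the expected edge at time $n$ by at least $2p$, uniformly in $n$ and in the set.  Coupling the $p$- and $q$-edges, stopping at the first time $\sigma$ they differ, and applying the lemma via the strong Markov property gives $\mE[\overline{u}_n(p)]-\mE[\overline{u}_n(q)]\ge 2q\,P(\sigma\le n)$; a telescoping over a fine partition of $[q,p]$ then yields the quantitative bound $\alpha[p]-\alpha[q]\ge p^2-q^2$ (Lemma~\ref{lem:strictlyincreasing}).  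This is exactly the ``additivity rather than mere $\ge$'' issue you raise, and the pointwise lemma is what finesses it; your regeneration sketch does not obviously deliver this.

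For the limit at $p_c$ the paper does not route through $\Theta_+(p_c)=0$.  It argues directly: if $\alpha[p_c]>0$, set up the block construction at $p_c$ with parameter $\alpha_0=\alpha[p_c]$ and choose $L$ so that the block event has probability $>1-\varepsilon$.  Since that event depends on finitely many sites, its probability is continuous in $p$, so it remains $>1-\varepsilon$ at some $p<p_c$, forcing percolation at that $p$ and contradicting the definition of $p_c$ (Lemma~\ref{lem:continuity}).  Your route is also valid in principle, but the implication $\alpha(p)>1\Rightarrow\Theta_+(p)>0$ is much simpler than your excursion argument suggests: in the rotated coordinates one just picks $M$ with $P(\overline{u}_n\ge -M\ \forall n)>\tfrac12$, uses symmetry for the lower edge, and intersects (Lemma~\ref{lem:alphabiggerthan0}).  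No block construction is required for that step.
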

Let $\tau=\sup\{y-x: (x,y)\in \mathbf{C}_o\}$, which measures the furthest diagonal line reached by the forward cluster of the origin.  More generally, if $z=(x_0,y_0)$, let 
$\tau_z=\sup\{(y-y_0)-(x-x_0): (x,y)\in \mathbf{C}_z\}$. Note that $|\mathbf{C}_o|=\infty\Leftrightarrow \tau=\infty$.
\begin{LEM}
\label{lem:dual_tail}
If $p>p_c^{\smallOTSP}$, then there exist constants $C$, $\gamma>0$ such that $\mP(n\le \tau<\infty)\le Ce^{-\gamma n}$.
\end{LEM}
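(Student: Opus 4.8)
The plan is to recast $\{n\le\tau<\infty\}$ as the event that the cluster of the origin in a suitable oriented percolation model reaches generation $n$ but is finite, and then to obtain the exponential bound from the block-construction (renormalization) argument of \cite{Dur84}. First I would take $t=y-x$ as a time coordinate: each admissible step ($\leftarrow$, $\uparrow$, $\nwarrow$) strictly increases $t$ (by $1$, $1$, $2$ respectively), so any open path out of $o=(0,0)$ reaching a site with $y-x=m$ has at most $m$ edges. Consequently $\mathbf{C}_o\subseteq\{(x,y):x\le 0,\ 0\le y-x\le\tau\}$ is finite whenever $\tau<\infty$ (which re-proves $|\mathbf{C}_o|=\infty\Leftrightarrow\tau=\infty$), and $\{n\le\tau<\infty\}$ is exactly the event that $\mathbf{C}_o$ is finite yet meets the diagonal $\{y-x=n\}$ or a later one. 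Since $\Theta_+(p)>0$ for $p>p_c^{\smallOTSP}$, what remains is to show that the probability that the cluster survives to generation $n$ and then dies out decays exponentially in $n$.

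This is the standard supercritical estimate on the lifetime of a finite cluster, and I would follow \cite{Dur84}, incorporating the modifications already flagged in the text: oriented \emph{site} percolation on the \emph{triangular} lattice in place of oriented bond percolation on $\Z^2$, plus the minor bookkeeping needed because the $\nwarrow$-edge advances two generations at once. Fix a large integer $L$, tile space-time by boxes of side $L$, and call a box \emph{good} if a favourably placed occupied site inside it forces, along open paths confined to a bounded neighbourhood, favourably placed occupied sites in the two forward-neighbouring boxes. Supercriticality implies that for $L$ large the field $\{B\text{ good}\}$ is $m$-dependent with $\mP(B\text{ good})\ge 1-\epsilon$, $\epsilon=\epsilon(L)\to 0$; choose $L$ large enough that $\mu_*\epsilon<1$, where $\mu_*$ is the connective constant of the coarse lattice. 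On the coarse lattice the occupied boxes then propagate through good boxes exactly as in oriented site percolation, and a Peierls estimate holds: if the coarse cluster of the origin is finite but reaches coarse time $k$, it is flanked by a path of at least $ck$ \emph{bad} boxes running from coarse time $0$ up to coarse time $k$; passing first to an independent field (Liggett--Schonmann--Stacey) and summing the probability $\le\epsilon^{\ell}$ over the at most $\mu_*^{\ell}$ such paths of each length $\ell\ge ck$ bounds this by $C(\mu_*\epsilon)^{ck}$.

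It remains to transfer this coarse-lattice bound back to the original cluster, and this is where I expect the real work to lie. One needs a restart/regeneration argument, again in the style of \cite{Dur84}, showing that conditionally on $\mathbf{C}_o$ surviving to generation $n$, the probability that it nonetheless never ``recovers'' to make a good occupied box at coarse time $\gtrsim n/L$ is at most $Ce^{-\gamma n}$; on the complementary event the Peierls estimate, applied on $\{\tau<\infty\}$, finishes the job, giving $\mP(n\le\tau<\infty)\le Ce^{-\gamma n}$. Thus the reformulation via the time coordinate and the Peierls count are routine, while the main obstacle is twofold: verifying that the triangular-lattice OTSP model satisfies the block condition with density tending to $1$ throughout its supercritical phase, and carrying out the regeneration argument that couples the original and coarse-grained processes, both being faithful adaptations of the treatment in \cite{Dur84}.
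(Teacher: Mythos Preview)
Your high-level plan (block construction followed by a Peierls estimate on the coarse system) is the same as the paper's, but the paper organizes the argument differently in a way that eliminates precisely the step you flag as ``the main obstacle''. Instead of working directly with the cluster of a single point and then needing a restart/regeneration argument to couple the fine and coarse clusters, the paper passes to the \emph{edge processes} $\overline{u}_n=\sup\overline{\xi}_n$ and $\underline{\ell}_n=\inf\underline{\xi}_n$, started from the full half-lines below and above the origin respectively. A short combinatorial lemma (the paper's Lemma~\ref{lem:tau}) shows that in the rotated model $\tau=\inf\{m:\underline{\ell}_m>\overline{u}_m\}$, so $\{n\le\tau<\infty\}\subset\bigcup_{m\ge n}\{\overline{u}_m\le 0\}\cup\bigcup_{m\ge n}\{\underline{\ell}_m\ge 0\}$. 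The whole problem therefore reduces to the large-deviation bound $P(\overline{u}_n\le an)\le Ce^{-\gamma n}$ for $a<\alpha[p]$ (applied with $a=0$), and it is \emph{this} that is proved via the $\eta$-system contour argument.

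The payoff of this detour is that $\overline{u}_n$ is defined for all $n$ (the half-line cluster never dies), and the transfer from coarse to fine scale becomes a one-line geometric observation: if the $\eta$-system edge $\overline{s}_n$ exceeds $qn$, then the chain of good boxes forces $\overline{u}_m$ to stay above the corresponding line for all $m$ in the associated range. No regeneration or restart is needed. By contrast, your route asks you to show that, conditionally on the single-point cluster surviving to generation $n$, it seeds a good coarse box except with exponentially small probability; this is doable but is genuinely extra work, and it is not the argument in \cite{Dur84} for this particular estimate (Durrett also uses the half-line edge processes). So your sketch is not wrong, but the paper's route through Lemma~\ref{lem:tau} and the bound on $P(\overline{u}_n\le 0)$ is both shorter and closer to the source you cite.
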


\section{Appendix: Adapting Durrett \cite{Dur84} and Grimmett \& Hiemer \cite{GH} to OTSP.}
\label{appendixsection}

We rotate the model through $3\pi/4$ clockwise and scale by $\sqrt{2}$, so work on $\mc{L}=\{(n,m)\in \Z^2:m+n \text{ is even },n\ge 0\}$.  An occupied site at $(n,m)\in \mc{L}$ connects to each of the sites $(n+1,m+1), (n+2,m), (n+1,m-1)$. Let $\mathbf{C}_x$ be the forward cluster, i.e.~the set of points that $o$ connects to. 
Given a percolation configuration on $\mc{L}$ (all connections and points are hereafter assumed implicitly to be in $\mc{L}$) let $\xi_n=\{x:o\ra (n,x)\}$ (so $\mathbf{C}_o=\cup_{n\ge 0}\big\{\{n\}\times\xi_n^o\big\}$) and define $u_n=\sup \xi_n$ and $\ell_n=\inf \xi_n$.  Here $u_n$ and $\ell_n$ are the upper and lower boundaries of the cluster of the origin, and $o=(0,0)$ is the origin.  Set $\overline{u}_0=0=\underline{\ell}_0$, and for $n\ge 1$ let
\newcommand{\oxn}{\overline{\xi}_n}
\newcommand{\uxn}{\underline{\xi}_n}
\newcommand{\oun}{\overline{u}_n}
\newcommand{\uln}{\underline{\ell}_n}
\newcommand{\oum}{\overline{u}_m}
\newcommand{\oumn}{\overline{u}_{m,n}}
\begin{align*}
\oxn=&\{x:\text{$\exists y\le 0$ such that $(0,y)\ra (n,x)$ or $(1,y)\ra (n,x)$}\},\quad \text{ and } \quad \oun=\sup \oxn,\\
\uxn=&\{x:\text{$\exists y\ge 0$ such that $(0,y)\ra (n,x)$ or $(1,y)\ra (n,x)$}\},\quad \text{ and } \quad \uln=\inf \uxn
\end{align*}
i.e.~$\oxn$ is the set of points at level $n$ that can be reached from below the origin, and $\oun$ is the highest point at level $n$ that can be reached from below the origin (similarly from above the origin).  [To make this terminology consistent, we should connect each $(1,y)\in\mc{L}$ to $(0,y)\notin\mc{L}$, when $y\le 0$. But we will not do so.]  Note that $(1,y)\in \mc{L}$ and $y\le 0$ implies that $y\le -1$.

Then 
\begin{equation}
\xi_n=\oxn \cap [\ell_n,\infty)\text{, and on }\{\xi_n\ne \varnothing\}\text{ we have }u_n=\oun. \label{firstclaim}
\end{equation}
{\em Proof:} It is clear that $\xi_n\subset\oxn \cap [\ell_n,\infty)$. Conversely, if $(n,x)$ belongs to $\oxn \cap [\ell_n,\infty)$ then there is a lattice path from $o$ to below $(n,x)$ and a lattice path from $(0,y)$ or $(1,y)$ to $(n,x)$ (with $y\le 0$). Connect them to make piecewise linear paths. Consider their heights, when their first coordinates equal 1. In the former case this is at least $-1$, and in the latter at most $-1$. Thus the two paths cross. This must happen at a lattice point, so following first the former and then the latter gives a path from $o$ to $(n,x)$. The second statement follows likewise.\qed

\medskip

For $n>m$ let $\oumn=\sup\{x-\oum: \text{$\exists y\le \oum$ such that $(m,y)\ra (n,x)$ or  $(m+1,y)\ra (n,x)$}\}$ be the altitude gain from $\oum$ to the highest point at level $n$ that can be reached from below $\oum$.  The value of $\oum$ is determined only by the $\mathbf{G}_{k,z}$ with $k<m$ (this is true even if $m=1$, with $\oum=\pm 1$ depending on whether $o$ is open or closed, and since a site is always connected to itself).  These are independent of the $\mathbf{G}_{k,z}$ with $k\ge m$, so it follows that $\{\overline{u}_{m,m+n}:0 < n\}\sim \{\oun: 0 <n\}$ (where $\sim$ denotes equality in distribution). Likewise $\{\overline{u}_{m+1,n+1}:0 \le m<n\}\sim \{\oumn: 0 \le m<n\}$ .  Moreover 
\begin{equation}
\oum+\oumn\ge \oun\quad\text{for $n>m$.}\label{subadditivity}
\end{equation}
{\em Proof:} There is a lattice path from below $o$ to $(n,\oun)$. Connecting points gives a piecewise linear path, so let $z$ be its height when its first coordinate reaches $m$. Then either $(m,z)$ or $(m+1,z)$ is a lattice point that $\ra(n,\oun)$. In the first case, clearly $z\le\oum$. In the second, $(m-1,z)$ is an open vertex in $\overline{\xi}_{m-1}$, so also $(m,z+1)\in\overline{\xi}_{m}$, so $z<\oum$.  Therefore in either case $\oumn\ge \oun-\oum$ as required. \qed

\medskip

Recall that $\oum$ and $\oumn$ are independent. So as in \cite{Dur84}, this implies that on the event $\Omega_{\infty}=\{|{\bf C}_o|=\infty\}$ that the cluster of the origin is infinite we have that 
\eq
\frac{u_n}{n}\ra \alpha:=\inf_{n\ge 1}\mE\Big[\frac{\oun}{n}\Big]=\lim_{n\ra \infty}\frac{\oun}{n} \text{ a.s., and } \frac{\ell_n}{n}\ra -\alpha \text{ a.s.}
\label{eqn:subadd}
\en
Kingman's theorem doesn't apply here, but the conditions of Liggett {\em Ann. Probab.} (1985) do apply (in the strengthened version where the moment condition assumed is that $E[(\overline{u}_1)_+]<\infty$), and give the desired conclusion. Note that $\alpha=-\infty$ is certainly permitted within \eqref{eqn:subadd}.  The fact that $\alpha$ cannot exceed 1 is obvious since no occupied site $(n,m)$ connects to any occupied site $(n+1,m+k)$ for $k>1$ nor to any occupied site $(n+2,m+k)$ for $k>2$.  In terms of the quantities above we have 
\[\overline{u}_{n+1}\le (\overline{u}_{n}+1)\vee \overline{u}_{n-1}, \quad \underline{\ell}_{n+1}\ge (\underline{\ell}_{n}-1)\wedge \underline{\ell}_{n-1}.\]

Since $u_n\ge \ell_n$ on $\Omega_{\infty}$ we have that $\mP(\Omega_{\infty})>0\Rightarrow \alpha\ge 0$ ( since $\alpha\ge -\alpha$). Since $\mE[\overline{u}_n]$ is continuous in $p$, it follows that $\alpha$ is upper-semi-continuous in $p$ (as an infimum of continuous functions), and therefore $\alpha[p_c^{\smallOTSP}]\ge 0$. We will see below that $\alpha$ is strictly increasing (Lemma \ref{lem:strictlyincreasing}) and continuous in $p\ge p_c^{\smallOTSP}$ (Lemma \ref{lem:continuity}), and that $\alpha=0$  (Lemma \ref{lem:continuity}) and $\mP(\Omega_\infty)=0$ for $p=p_c^{\smallOTSP}$ (Lemma \ref{lem:criticalcluster}).

For $A\subset \Z$, let $\xi_n^{A}=\{x: (0,y) \ra (n,x) \text{ for some }y \in A\}$, $u^A_n=\sup\xi^A_n$, $\ell^A_n=\inf\xi^A_n$. We have the following:
\begin{equation}
\xi^A_m=\emptyset \Rightarrow \xi^A_n=\emptyset\text{ for every $n\ge m$}.\label{xiproperty}
\end{equation}
\proof Suppose that $\xi^A_m=\emptyset$.  Take $n>m$ and $(n,y)\in\xi^A_n$. There is an open lattice path from some $(0,x)$ to $(n,y)$, with $x\in A$. Joining points gives a piecewise linear path. Let $z$ be its height, when its first coordinate reaches $m$.  Then $(m,z)$ cannot be a lattice point, as then we'd have $(m,z)\in\xi^A_m$. Therefore $(m-1,z)$ is a lattice point, and is open by definition. But then $(m,z\pm 1)$ also $\in\xi^A_m$, which is a contradiction.
\qed
\medskip

Define 
\[\tau^A=\inf\{n:\xi_n^{A}=\varnothing\}.\]
By \eqref{xiproperty} this $=1+\sup\{n:\xi_{n}^{A}\ne\varnothing\}$.
Set $\tau =\tau^{\{0\}}$.  Then $\{\tau<\infty\}=\{|\mathbf{C}_o|<\infty\}$.  

\begin{LEM}
\label{lem:tau}
$\tau=\inf\{m\ge 0:\underline{\ell}_m>\overline{u}_m \}.$
\end{LEM}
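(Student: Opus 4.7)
The plan is to prove the inequalities $\tau\le M$ and $M\le\tau$ separately, where $M:=\inf\{m\ge 0:\underline{\ell}_m>\overline{u}_m\}$. The direction $\tau\le M$ is immediate: any $x\in\xi_m$ is reached from $o=(0,0)$, so (taking $y=0\le 0$ and $y=0\ge 0$ respectively) $x\in\overline{\xi}_m\cap\underline{\xi}_m$ and thus $\underline{\ell}_m\le x\le\overline{u}_m$; hence $\underline{\ell}_m>\overline{u}_m$ forces $\xi_m=\emptyset$ and $M\ge\tau$.

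For $M\le\tau$ (assuming $\tau<\infty$) it suffices to prove $\underline{\ell}_\tau>\overline{u}_\tau$ pointwise at the specific time $\tau$. I would argue by contradiction: assume $\underline{\ell}_\tau\le\overline{u}_\tau$, and take realizing paths $P$ from $(a_1,y_1)$ with $y_1\le 0$ to $(\tau,\overline{u}_\tau)$ and $Q$ from $(a_2,y_2)$ with $y_2\ge 0$ to $(\tau,\underline{\ell}_\tau)$. If either start is $o$ we are immediately done, so assume $y_1\le -2$ and $y_2\ge 2$, giving piecewise-linear heights with $h_P(1)\le -1<1\le h_Q(1)$ and $h_P(\tau)\ge h_Q(\tau)$.

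The key step is a crossing/shared-vertex lemma adapted to $\mathcal{L}$: since on each unit interval the slopes of $h_P,h_Q$ lie in $\{-1,0,+1\}$ and integer-time heights have the forced parity, any equality $h_P(t)=h_Q(t)$ forces $t$ to be an integer $k^*\in[1,\tau]$, and a further parity check rules out one path being mid-$2$-edge while the other sits at a vertex, so $P$ and $Q$ must share a lattice vertex at $k^*$ (either $(k^*,h^*)$ itself, or both endpoints of a common $2$-edge). For the minimal such $k^*$, if $k^*<\tau$ then the shared vertex $(k^*,h^*)$ is open (as an interior vertex of both paths) and lies in $\overline{\xi}_{k^*}\cap\underline{\xi}_{k^*}$; since $\xi_{k^*}\ne\emptyset$, \eqref{firstclaim} and its symmetric analogue give $\ell_{k^*}=\underline{\ell}_{k^*}$, so $(k^*,h^*)\in\overline{\xi}_{k^*}\cap[\ell_{k^*},\infty)=\xi_{k^*}$, and following the suffix of $P$ produces an open path from $o$ to $(\tau,\overline{u}_\tau)\in\xi_\tau$, contradicting $\xi_\tau=\emptyset$.

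The obstacle is the residual subcase $k^*=\tau$, in which $\overline{u}_\tau=\underline{\ell}_\tau=:h$ and $P,Q$ are vertex-disjoint on $[1,\tau-1]$. The preparatory observation is that $\xi_\tau=\emptyset$ forces $\xi^{\circ}_{\tau-i}=\emptyset$ for $i=1,2$ (writing $\xi^{\circ}$ for open vertices of $\xi$), which via \eqref{firstclaim} and parity upgrades to: open vertices of $\overline{\xi}_{\tau-i}$ have height $\le\underline{\ell}_{\tau-i}-2$, and open vertices of $\underline{\xi}_{\tau-i}$ have height $\ge\overline{u}_{\tau-i}+2$. Enumerating the last steps of $P$ and $Q$ into $(\tau,h)$ (via $(\tau-1,h\pm 1)$, or via the $2$-edge from $(\tau-2,h)$), the "both via level $\tau-1$" subcases collide with $\underline{\ell}_{\tau-1}\le\overline{u}_{\tau-1}$ directly; in each mixed subcase the trick is to exploit the open $2$-edge predecessor on one path, say $(\tau-2,h)\in\underline{\xi}^{\circ}_{\tau-2}$, and extend it sideways one further step to place $h-1$ into $\underline{\xi}_{\tau-1}$, yielding $\underline{\ell}_{\tau-1}\le h-1$; combined with $h\le\underline{\ell}_{\tau-1}-1$ forced by $P$'s predecessor in $\overline{\xi}^{\circ}_{\tau-1}$ this gives $\underline{\ell}_{\tau-1}\le\underline{\ell}_{\tau-1}-2$, a contradiction. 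A symmetric argument handles the swapped mixed subcase. The careful bookkeeping of these few $k^*=\tau$ sub-subcases is the genuinely delicate step.
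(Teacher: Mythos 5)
Your proposal is correct, but it organizes the hard direction differently from the paper. You argue by contradiction directly at level $\tau$: assuming $\underline{\ell}_\tau\le\overline{u}_\tau$, you take the two extremal paths, locate their first height-coincidence, show by the slope/parity argument that it occurs at a shared lattice vertex (or a shared $2$-edge), splice when this happens before level $\tau$, and then dispose of the residual case where the paths first meet at their common endpoint $(\tau,h)$ by a case analysis on the last steps, using the observation that every vertex of $\xi_{\tau-1}$ and $\xi_{\tau-2}$ must be closed. I checked the subcases and they do close up (the both-via-$2$-edge case being excluded by minimality of the meeting time), so the argument is sound; the same is true of the crossing-at-a-lattice-point lemma, which is exactly the splicing device the paper already uses in \eqref{firstclaim}. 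The paper instead avoids the endpoint-collision bookkeeping altogether by an asymmetric comparison across consecutive levels: for $\tau=m$ it splices the path realizing $\overline{u}_m$ against the path from $o$ to $(m-1,\underline{\ell}_{m-1})$ (so only one of the two paths terminates at the level where they could meet), deduces $\overline{u}_m\le\underline{\ell}_{m-1}-1$ and symmetrically $\underline{\ell}_m\ge\overline{u}_{m-1}+1$, and chains these with $\underline{\ell}_{m-1}\le\overline{u}_{m-1}$ to get $\underline{\ell}_m>\overline{u}_m$. So the paper's route buys a shorter proof with no delicate residual case, while yours is self-contained at level $\tau$ at the cost of the last-step casework. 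One small imprecision in your write-up: after excluding starts at $o$ you say ``assume $y_1\le-2$ and $y_2\ge2$,'' which overlooks starts of the form $(1,\mp1)$; the inequality you actually use, $h_P(1)\le-1<1\le h_Q(1)$, is nevertheless valid in all cases (as in the proof of \eqref{firstclaim}), so this is cosmetic.
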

\proof
Note first that $\xi_k\ne \varnothing\Rightarrow \underline{\ell}_k=\inf \xi_k\le \sup \xi_k=\overline{u}_k$.  Therefore the $\le$ part of the Lemma follows from \eqref{xiproperty}. 

Conversely, let $\tau=m\ge 1$. There are open lattice paths from $o$ to $(m-1,\underline{\ell}_{m-1})$ and from below $o$ to $(m,\overline{u}_m)$. Let $\overline{z}$ denote the height where the first coordinate of the latter path (joined up) reaches $m-1$. If $\overline{z}\ge \underline{\ell}_{m-1}$ then the two paths cross (at a lattice point), and can be spliced together as before, to show that $(m,\overline{u}_m)\in\xi_m$, contradicting $\xi_m=\emptyset$. Therefore $\overline{z}< \underline{\ell}_{m-1}$. 

One of $(m-1,\overline{z})$ or 
$(m-2,\overline{z})$ is a lattice point. If $(m-1,\overline{z})\in \mc{L}$ then $\overline{z}\le \underline{\ell}_{m-1}-2$ and $\overline{u}_m=\overline{z}+1$, so $\overline{u}_m\le \underline{\ell}_{m-1}-2+1=\underline{\ell}_{m-1}-1$. 
If $(m-2,\overline{z})$ is the lattice point then $\overline{z}\le \underline{\ell}_{m-1}-1$ and $\overline{u}_m=\overline{z}$, so again $\overline{u}_m\le \underline{\ell}_{m-1}-1$. 
A similar argument shows that $\underline{\ell}_{m}\ge \overline{u}_{m-1}+1$. Therefore 
$$
\underline{\ell}_{m}\ge\overline{u}_{m-1}+1\ge\underline{\ell}_{m-1}+1>\underline{\ell}_{m-1}-1\ge\overline{u}_{m}. \qed
$$
Let 
\begin{align*}
\overline{\xi}^z_n&=\{y: \exists x\le z\text{ such that $(0,x)\to (n,y)$ or $(1,x)\to (n,y)$}\}, \quad \overline{u}^z_n=\sup \overline{\xi}^z_n\\
\underline{\xi}^z_n&=\{y: \exists x\ge z\text{ such that $(0,x)\to (n,y)$ or $(1,x)\to (n,y)$}\}, \quad \underline{\ell}^z_n=\inf \underline{\xi}^z_n.
\end{align*}
Let $M>0$. It follows as in \eqref{firstclaim} that $\xi^{[-M,M]}_n=\overline{\xi}^M_n\cap [\ell^{[-M,M]},\infty)=\underline{\xi}^{-M}_n\cap (-\infty,u^{[-M,M]}]$, and that on $\xi^{[-M,M]}_n\neq\emptyset$ we have $u^{[-M,M]}_n=\overline{u}^M_n$ and $\ell^{[-M,M]}_n=\underline{\ell}^{-M}_n$. As in Lemma \ref{lem:tau}, 
$$
\tau^{[-M,M]}=\inf\{m\ge 0:\underline{\ell}_m^{-M}>\overline{u}_m^M \}
$$
from which we see immediately that 
\begin{equation}
\{\tau^{[-M,M]}=\infty\}\supset\{\underline{\ell}_m^{-M}\le 0\le \overline{u}_m^M \quad\forall m\}.\label{taucriterion}
\end{equation}
\begin{LEM}
\label{lem:alphabiggerthan0}
$\alpha>0\Rightarrow \mP(\Omega_\infty)>0$.
\end{LEM}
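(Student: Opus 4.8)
The plan is to exploit the criterion \eqref{taucriterion}. Suppose I can produce an even integer $M>0$ with $\mP(\tau^{[-M,M]}=\infty)>0$. I claim this already gives $\mP(\Omega_\infty)>0$. Indeed, on $\{\tau^{[-M,M]}=\infty\}$ we have $\xi_n^{[-M,M]}\neq\varnothing$ for all $n$, and $\xi_n^{[-M,M]}=\bigcup_x\xi_n^{\{x\}}$ is a finite union over the finitely many $x$ with $(0,x)\in\mc{L}$ and $|x|\le M$; so some fixed such $x^*$ has $\xi_n^{\{x^*\}}\neq\varnothing$ for infinitely many $n$, and by \eqref{xiproperty} this forces $\xi_n^{\{x^*\}}\neq\varnothing$ for all $n$, i.e.\ $|\mathbf{C}_{(0,x^*)}|=\infty$. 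Hence $\{\tau^{[-M,M]}=\infty\}\subset\bigcup_x\{|\mathbf{C}_{(0,x)}|=\infty\}$, and translation invariance of $\mP$ gives $0<\mP(\tau^{[-M,M]}=\infty)\le (M+1)\,\mP(\Omega_\infty)$.

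By \eqref{taucriterion} it therefore suffices to find an even $M>0$ with $\mP\big(\underline\ell_m^{-M}\le 0\le\overline u_m^M\ \forall m\ge 0\big)>0$, and I will control the two inequalities separately. Shifting the environment down by $M$ (a lattice automorphism of $\mc{L}$ since $M$ is even, hence law-preserving) identifies $\overline\xi_m^M$ with $M+\overline\xi_m^0$ evaluated in the shifted environment; consequently $\mP(\overline u_m^M\ge 0\ \forall m)=\mP(\overline u_m\ge -M\ \forall m)$, and symmetrically (shift up by $M$) $\mP(\underline\ell_m^{-M}\le 0\ \forall m)=\mP(\underline\ell_m\le M\ \forall m)$.

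Now I bring in the hypothesis $\alpha>0$. By \eqref{eqn:subadd} we have $\overline u_m/m\to\alpha$ a.s., so $\overline u_m\to+\infty$ a.s.; together with $\overline u_0=0$ and the deterministic bound $\overline u_m\le m$ (which follows by induction from $\overline u_{m+1}\le(\overline u_m+1)\vee\overline u_{m-1}$), this makes $\inf_{m\ge 0}\overline u_m$ an almost surely attained, finite infimum. Hence $\mP(\overline u_m\ge -M\ \forall m)\uparrow 1$ as $M\to\infty$; and likewise $\underline\ell_m/m\to-\alpha<0$ forces $\max_{m\ge 0}\underline\ell_m<\infty$ a.s.\ and $\mP(\underline\ell_m\le M\ \forall m)\uparrow 1$. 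Choosing $M$ even and large enough that both probabilities exceed $3/4$, the intersection of the two events has probability $>1/2>0$, which via \eqref{taucriterion} and the first paragraph completes the proof.

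\textbf{Expected main difficulty.} There is no deep obstacle here; the only care needed is the stationarity bookkeeping — keeping $M$ even so that translating $\mc{L}$ by $(0,M)$ is genuinely an automorphism, and correctly matching $\overline u_m^M$ (resp.\ $\underline\ell_m^{-M}$) with an $M$-shift of $\overline u_m$ (resp.\ $\underline\ell_m$) — together with the elementary but essential observation that $\overline u_m$ is a.s.\ finite for every $m$, so that ``$\overline u_m\to+\infty$'' really does yield an a.s.\ finite infimum over $m\ge 0$.
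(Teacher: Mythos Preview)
Your proof is correct and follows essentially the same route as the paper: use $\alpha>0$ to get $\overline{u}_m\to\infty$ a.s., choose $M$ large so that both $\mP(\overline{u}_m\ge -M\ \forall m)$ and $\mP(\underline{\ell}_m\le M\ \forall m)$ are close to $1$, intersect, and invoke \eqref{taucriterion}. You are in fact more careful than the paper in two places it glosses over: you keep $M$ even so that the shift identifying $\overline{u}_m^M$ with $M+\overline{u}_m$ is a genuine automorphism of $\mc{L}$, and you spell out (via \eqref{xiproperty}, pigeonhole, and translation invariance) the passage from $\mP(\tau^{[-M,M]}=\infty)>0$ to $\mP(\Omega_\infty)>0$, which the paper asserts without comment.
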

\proof If $\alpha>0$ then $\overline{u}_n\to\infty$, so we may choose $M$ such that 
$$
\mP( \overline{u}_n^M\ge 0\,\forall n)=\mP( \overline{u}_n\ge-M\,\forall n)>\frac12.
$$
Then also $\mP( \underline{\ell}_n^{-M}\le 0\,\forall n)>\frac12$. Therefore by \eqref{taucriterion}, $\mP(\Omega_\infty)>0$. \qed\bigskip

For $A\subset\mc{L}$, let $\xi_A^n=\{x:z\to (n,x)\text{ for some $z\in A$}\}$ and $u_A^n=\sup\xi_A^n$. (Note that we've transposed super/subscripts to set this apart from our earlier notation, where in any case the set  $A$ was of a different type.) 
\begin{LEM}
\label{lem:addingpoints}
Let $A\supset B$ be infinite subsets of $C=\{(i,j)\in\mc{L}: \text{$i=0$ or 1, and $j<0$}\}$. Then 
$$
\mE[u^n_{B\cup \{o\}}-u^n_B]\ge \mE[u^n_{A\cup \{o\}}-u^n_A]\ge 2p.
$$
\end{LEM}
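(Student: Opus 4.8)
The plan is to treat the first inequality as a deterministic monotonicity statement, and then to reduce the lower bound $2p$ first to the case $A=C$ and finally to a short set-manipulation identity. So, first I would dispose of the first inequality. For any $X\subseteq\mc{L}$ we have $\xi^n_{X\cup\{o\}}=\xi^n_X\cup\xi^n_{\{o\}}$, hence $u^n_{X\cup\{o\}}=u^n_X\vee u^n_{\{o\}}$, and therefore $u^n_{X\cup\{o\}}-u^n_X=(u^n_{\{o\}}-u^n_X)^+$ as soon as $u^n_X$ is finite. Since an infinite subset of $C$ is unbounded below in height, it contains infinitely many points whose forward cones up to level $n$ are pairwise disjoint, and each such point reaches level $n$ with probability at least $p^n$ along the straight NE path; hence $\xi^n_A,\xi^n_B$ are a.s.\ nonempty, while being bounded above by $n-2$, so $u^n_A,u^n_B$ are a.s.\ finite and integrable. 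Because $B\subseteq A$ forces $u^n_B\le u^n_A$ pointwise, we get $(u^n_{\{o\}}-u^n_B)^+\ge(u^n_{\{o\}}-u^n_A)^+$ pointwise, and taking expectations gives the first inequality.

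For the bound $2p$ I would first apply the inequality just proved with $(C,A)$ in the role of $(A,B)$, which is legitimate since $A\subseteq C$; this reduces matters to showing $\mE[u^n_{C\cup\{o\}}-u^n_C]\ge 2p$. The case $n=0$ is immediate ($u^0_{C\cup\{o\}}=0$ while $u^0_C\le-2$), so assume $n\ge1$. For $n\ge1$ one has $\xi^n_{\{o\}}=\varnothing$ when $o$ is closed and $\xi^n_{\{o\}}=\xi^n_{\{(1,1),(1,-1),(2,0)\}}$ when $o$ is open, so $u^n_{C\cup\{o\}}-u^n_C=(u^n_{\{o\}}-u^n_C)^+$ vanishes when $o$ is closed and equals $(u^n_{\{(1,1),(1,-1),(2,0)\}}-u^n_C)^+$ when $o$ is open. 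That latter random variable does not depend on $\mathbf{G}_o$ (all relevant paths avoid the site $o$), hence is independent of the event $\{o\text{ open}\}$; together with $(1,-1)\in C$ and $u^n_{C\cup S}=u^n_C\vee u^n_S$ this yields $\mE[u^n_{C\cup\{o\}}-u^n_C]=p\,\mE[u^n_{C\cup\{(1,1),(2,0)\}}-u^n_C]$. So it remains to prove $\mE[u^n_{C\cup\{(1,1),(2,0)\}}-u^n_C]\ge 2$.

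For this last step I would set $D=C\cup\{(1,1)\}$ and use two observations. (i) $D\cup\{o\}=C+(0,2)$ — both sets equal $\{(0,j)\in\mc{L}:j\le0\}\cup\{(1,j)\in\mc{L}:j\le1\}$ — so, vertical translation by $2$ being an automorphism of the model, $\mE[u^n_{D\cup\{o\}}]=\mE[u^n_{C+(0,2)}]=\mE[u^n_C]+2$. (ii) For $n\ge1$ there is a pointwise inclusion $\xi^n_{D\cup\{o\}}\subseteq\xi^n_{D\cup\{(2,0)\}}$: indeed $\xi^n_{\{o\}}\subseteq\xi^n_{\{(1,1)\}}\cup\xi^n_{\{(1,-1)\}}\cup\xi^n_{\{(2,0)\}}$, and the first two sets lie in $\xi^n_D$ because $(1,1),(1,-1)\in D$. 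From (ii), $u^n_{D\cup\{o\}}\le u^n_{D\cup\{(2,0)\}}$ pointwise, so $\mE[u^n_{C\cup\{(1,1),(2,0)\}}]=\mE[u^n_{D\cup\{(2,0)\}}]\ge\mE[u^n_{D\cup\{o\}}]=\mE[u^n_C]+2$, as needed; and then $\mE[u^n_{A\cup\{o\}}-u^n_A]\ge\mE[u^n_{C\cup\{o\}}-u^n_C]=p\,\mE[u^n_{C\cup\{(1,1),(2,0)\}}-u^n_C]\ge 2p$.

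The step I expect to be the real obstacle is producing the constant $2$ (rather than $2p$ or $2p^n$) in that last expectation: a crude estimate of the form ``the increment is $\ge 2$ on an event of probability $\ge p$'' fails, because $o$'s cluster may die before level $n$, so one must compare the full distributions. What rescues it is the identity $D\cup\{o\}=C+(0,2)$ together with the domination (ii), which says that adjoining the deterministic point $(2,0)$ to $D$ is never worse than adjoining the random point $o$. The remaining care is only with $-\infty$ values and integrability (each $u^n$ above is $\le n$ and has integrable negative part, via independent NE-path events); alternatively the translation step in (i) could be replaced by a path-crossing argument of the kind used repeatedly in this appendix.
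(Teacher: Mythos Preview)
Your proof is correct and follows essentially the same route as the paper: the first inequality via the positive-part identity $u^n_{X\cup\{o\}}-u^n_X=(u^n_{\{o\}}-u^n_X)^+$ and monotonicity in $X$, then reduction to $A=C$, conditioning on whether $o$ is open, the domination $\xi^n_{C\cup\{o,(1,1)\}}\subseteq\xi^n_{C\cup\{(1,1),(2,0)\}}$, and the translation identity $C\cup\{o,(1,1)\}=C+(0,2)$. You are slightly more explicit than the paper about integrability and about handling $u^n_{\{o\}}=-\infty$, which is a plus.
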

\proof
As in \cite{Dur84}, 
$$
u^n_{B\cup \{o\}}-u^n_B=(u^n_{ \{o\}}-u^n_B)_+\ge (u^n_{ \{o\}}-u^n_A)_+=u^n_{A\cup \{o\}}-u^n_A.
$$
This shows the first inequality, and also shows that the minimal choice of $A$ is $A=C$.  The second inequality is trivially true with $n=0$ (it reads $2\ge 2p$ in this case) so suppose that $n>0$.  Then
\begin{align*}
\mE[u^n_{C\cup\{o\}}]
&=\mE[u^n_{C\cup\{o\}}1_{\text{\{$o$ open}\}}]+\mE[u^n_{C\cup\{o\}}1_{\{\text{$o$ closed}\}}]
=\mE[u^n_{C\cup\{(1,1),(2,0)\}}1_{\{\text{$o$ open}\}}]+\mE[u^n_{C}1_{\{\text{$o$ closed}\}}]\\
&=p\mE[u^n_{C\cup\{(1,1),(2,0)\}}]+(1-p)\mE[u^n_{C}]\ge p\mE[u^n_{C\cup\{o,(1,1)\}}]+(1-p)\mE[u^n_{C}]\\
&=p(\mE[u^n_{C}]+2)+(1-p)\mE[u^n_{C}]=\mE[u^n_{C}]+2p
\end{align*}
which establishes the second inequality.  To obtain the inequality we have used the fact that $o$ can only connect to $(1,-1)\in C$, $(1,1)$, and $(2,0)$. And in the second-to-last step, we use that translating $C$ by $(0,2)$ gives $C\cup\{o,(1,1)\}$. 
\qed
 \medskip
 
Recall that $\alpha[p]\ge 0$ for $p\ge p^{\smallOTSP}$ (see the discussion between \eqref{eqn:subadd} and \eqref{xiproperty}).  The following implies that $\alpha$ is strictly increasing on $[p_c^{\smallOTSP},1]$, so $\alpha>0$ for $p>p_c^{\smallOTSP}$ (a fact we will need repeatedly in what follows). 
\begin{LEM}
\label{lem:strictlyincreasing}
If $p>q$ and $\alpha[q]>-\infty$ then $\alpha[p]-\alpha[q]\ge p^2-q^2$. \end{LEM}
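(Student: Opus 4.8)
The plan is to differentiate $\mE_r[\overline{u}_n]$ in the percolation parameter $r$, bound the derivative below by essentially $2rn$, and then integrate from $q$ to $p$, divide by $n$ and let $n\to\infty$. Concretely I would establish that there is a constant $C_0$, independent of $n$, with
\[
\mE_p[\overline{u}_n]-\mE_q[\overline{u}_n]\ \ge\ (p^2-q^2)\,n-C_0\qquad(n\ge 1).
\]
Granting this, divide by $n$ and let $n\to\infty$: by \eqref{eqn:subadd} (i.e.\ by the Liggett subadditive theorem applied above) $\tfrac1n\mE_r[\overline{u}_n]\to\alpha[r]$ for every $r$, and since $\alpha[q]>-\infty$ the left‑hand side tends to the finite number $\alpha[p]-\alpha[q]$, giving $\alpha[p]-\alpha[q]\ge p^2-q^2$ (and in particular $\alpha[p]>-\infty$).

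To get the displayed inequality I would use Russo's formula, in the form
\[
\frac{d}{dr}\,\mE_r[\overline{u}_n]=\sum_{x}\mE_r\!\left[\overline{u}_n(\omega^{x\to1})-\overline{u}_n(\omega^{x\to0})\right],
\]
where $\omega^{x\to i}$ is the configuration with the state of $x$ forced to $i$, each increment $\nabla_x\overline{u}_n:=\overline{u}_n(\omega^{x\to1})-\overline{u}_n(\omega^{x\to0})\ge 0$ because $\overline{u}_n$ is increasing in $\omega$, and the sum runs over the sites at levels $0,\dots,n-1$ (the only ones on which $\overline{u}_n$ depends). Equivalently this is the coupling identity $\mE_p[\overline{u}_n]-\mE_q[\overline{u}_n]=\sum_x\int_q^p\mE_r[\nabla_x\overline{u}_n]\,dr$ obtained by telescoping over the sites that become open as $r$ runs from $q$ to $p$. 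Since $\overline{u}_n$ in fact depends on infinitely many sites (all heights at levels $<n$), this step requires the routine truncation $\overline{u}_n^{[-M,M]}\uparrow\overline{u}_n$ obtained by restricting the initial interface to heights in $[-M,M]$, followed by $M\to\infty$; I would dispose of it by monotone/dominated convergence, using that $\mE_r[\overline{u}_n]$ is finite for every $r$ (the top height at level $n$ of the first below‑origin cluster reaching level $n$ has an integrable tail, by a straight‑path lower bound and independence along a sublattice).

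The core estimate is the per‑level bound
\[
\sum_{z:\,(m,z)\in\mc{L}}\mE_r[\nabla_{(m,z)}\overline{u}_n]\ \ge\ 2r\qquad(0\le m\le n-1),
\]
which, summed over $m$ and integrated, gives exactly $\int_q^p 2rn\,dr=(p^2-q^2)n$ (the last few levels account for $C_0$). For $m=0$ this is precisely Lemma \ref{lem:addingpoints} with $A=B=C$: since $o$ contributes nothing when closed, $u^n_{C\cup\{o\}}-u^n_{C}=\mathbf 1_{\{o\text{ open}\}}\,\nabla_o\overline{u}_n$, so the bound ``$\ge 2r$'' there reads $r\,\mE_r[\nabla_o\overline{u}_n]\ge 2r$. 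For $m\ge1$ I would keep only the top reachable site, $\sum_z\nabla_{(m,z)}\overline{u}_n\ge\nabla_{(m,\overline{u}_m)}\overline{u}_n$ (all terms are $\ge0$ and $\nabla_{(m,z)}\overline{u}_n=0$ unless $z\in\overline{\xi}_m$), condition on the configuration at levels $<m$ — which fixes $\overline{u}_m$ and the set of sites available for propagation from level $m$ onward — and then apply the translate of Lemma \ref{lem:addingpoints} ``rooted'' at $(m,\overline{u}_m)$: its monotonicity half, $u^N_{B\cup\{\text{root}\}}-u^N_B=(u^N_{\{\text{root}\}}-u^N_B)_+\ge(u^N_{\{\text{root}\}}-u^N_{C'})_+$, lets one replace the realized interface by the maximal admissible one $C'$ (a translate of $C$), and the second half of that proof — which used only translation invariance of $\mP$ and the three forward edges at a site — then again yields $\ge 2r$.

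The main obstacle is making this last comparison honest. The realized interface entering level $m$ is not literally a height‑translate of the level‑$0$ interface $C\cup\{o\}$: it is a random set, and the dynamics carries a two‑level memory (an occupied reachable site at level $m-1$ connects both to level $m$ and directly to level $m+1$), so the correct ``state'' is really the pair $(\overline{\xi}_{m-1},\overline{\xi}_m)$. One therefore has to set up the interface process carefully and verify, uniformly in the past, that $(m,\overline{u}_m)$ does play the role of the origin relative to a set that is dominated by a translate of $C$; this, together with the truncation bookkeeping and the $O(1)$ boundary levels, is exactly the ``minor modification of Durrett \cite{Dur84}'' referred to in the introduction. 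Once it is in place, the integration over $r\in[q,p]$ and the passage $n\to\infty$ are immediate.
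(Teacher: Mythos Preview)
Your approach is correct and genuinely different from the paper's. The paper does not use Russo's formula. Instead it couples the $p$- and $q$-systems via the uniforms $U_x$, lets $\sigma$ be the first level at which $\overline{u}_n[p]>\overline{u}_n[q]$, and applies Lemma~\ref{lem:addingpoints} \emph{once}, at the (strong Markov) time $\sigma-1$, to gain a single additive $2q$ on the event $\{\sigma\le n\}$. That gives
\[
\alpha_n[p]-\alpha_n[q]\ \ge\ 2q\bigl(1-(1-(p-q))^n\bigr),
\]
and the factor $p^2-q^2$ is then manufactured by telescoping this one-step estimate over a fine mesh of parameter values and passing to a Riemann integral $\int_q^p 2r\,dr$.

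Your route --- differentiate $r\mapsto\mE_r[\overline{u}_n]$ and invoke Lemma~\ref{lem:addingpoints} at \emph{every} level $m$ after conditioning on $\mc{F}_{m-1}$ --- is cleaner and avoids both the stopping time and the telescoping limit. In fact it is sharper than you state: your own computation $r\,\mE_r[\nabla_o\overline{u}_n]\ge 2r$ gives $\mE_r[\nabla_o\overline{u}_n]\ge 2$, and the same holds at each level, so the per-level bound is $2$ rather than $2r$, yielding $\alpha[p]-\alpha[q]\ge 2(p-q)\ge p^2-q^2$. (This is consistent with $\alpha[p]\le\mE[\overline{u}_1]=2p-1$.) The two-level memory from the $(2,0)$ step is handled identically in both proofs: the interface at time $m$ consists of $\overline{\xi}_m$ together with the level-$(m+1)$ targets of open sites in $\overline{\xi}_{m-1}$, all of which lie strictly below $(m,\overline{u}_m)$, so the realized $B$ is contained in a translate of $C$ and Lemma~\ref{lem:addingpoints} applies verbatim. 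The truncation you describe is indeed routine once one notes that $P_r(\overline{\xi}_m^{[-M,0]}\ne\varnothing)\to1$ uniformly over the finitely many $m\le n-1$ and over $r\in[q,p]$ (using $q>0$, which is forced by $\alpha[q]>-\infty$).
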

\proof 
Let $\alpha_n[p]=\mE[\oun[p]]$. Couple the percolation clusters for all $p$ together as usual, using uniform random variables $U_z$ at each lattice point $z\in\mc{L}$. Therefore $\overline{\xi}_n[p]\supset\overline{\xi}_n[q]$, so $\oun[p]\ge \oun[q]$. Let $\sigma$ be the first $n$ with $\oun[p]> \oun[q]$. Then $\sigma-1$ is a stopping time relative to the filtration $\mc{F}_k$ generated by the $U_{(i,j)}$ with $i\le k$.

Let $A$ be the set of lattice points $(i,x)$ with $i\ge \sigma$ that can be reached in a single step, from open  vertices $(j,y)$ satisfying $j<\sigma$ and $y\in\overline{\xi}_j[p]$. 
Let $B$ be the corresponding object, but using $\overline{\xi}_j[q]$. Then for $n\ge\sigma$ we have $\overline{\xi}_n[p]=\xi^n_A[p]$ and $\overline{\xi}_n[q]=\xi^n_{B}[q]$. Both are $\mc{F}_{\sigma-1}$-measurable. By definition, $A\supset B\cup\{(\sigma,\overline{u}_\sigma[p])\}$, and every $(x,y)\in B$ satisfies $y<\overline{u}_\sigma[p]$. Therefore by Lemma \ref{lem:addingpoints} and the strong Markov property at time $\sigma-1$,
\begin{multline*}
\mE[\oun[p]1_{\{n\ge\sigma\}}]=\mE[u^n_A[p] 1_{\{n\ge\sigma\}}]
\ge \mE[u^n_A[q] 1_{\{n\ge\sigma\}}]\\
\ge \mE\big[(u^n_{B}[q]+2q) 1_{\{n\ge\sigma\}}\big]
=\mE[\oun[q]1_{\{n\ge\sigma\}}]+2q\mP(n\ge\sigma).
\end{multline*}
Of course, $\mE[\oun[p] 1_{\{n<\sigma\}}]=\mE[\oun[q] 1_{\{n<\sigma\}}]$, so we conclude that $\alpha_n[p]\ge\alpha_n[q]+2q\mP(n\ge\sigma)$. 

At each step there is probability $p-q$ that $(k,\overline{u}_k[p])$ is open for $p$-percolation, but closed for $q$-percolation. If at least one of these events holds, for $k<n$, then $n\ge\sigma$. Therefore $\mP(n\ge\sigma)\ge 1-(1-(p-q))^n$, giving the inequality
$$
\alpha_n[p]-\alpha_n[q]\ge 2q\Big(1-(1-(p-q))^n\Big).
$$
Take $M$ large, and set $\delta=(p-q)/M$. Then
\begin{align*}
\alpha[p]-\alpha[q]&=\lim_{n\to\infty}\frac{\alpha_n[p]-\alpha_n[q]}{n}=\lim_{n\to\infty}\frac{1}{n}\sum_{k=1}^{Mn}\left[\alpha_n\Big(q+\frac{k\delta}{n}\Big)-\alpha_n\Big(q+\frac{(k-1)\delta}{n}\Big)\right]\\
&\ge \lim_{n\to\infty}\frac{1}{n}\sum_{k=1}^{Mn}2\Big(q+\frac{(k-1)\delta}{n}\Big)\Big(1-(1-\frac{\delta}{n})^n\Big)
= \frac{1-e^{-\delta}}{\delta}\int_q^{q+\delta M}2t\,dt.
\end{align*}
Sending $M\to\infty$ gives the bound $p^2-q^2$, as required.
\qed\bigskip

As in (7.2) of \cite{Dur84} we have the following result.
\begin{LEM} 
\label{lem:expldecay}
Let $\alpha'>\alpha$. 
There exist constants $C$, $\gamma>0$ depending on $\alpha,\alpha'$ such that
$$
\mP(\oun>\alpha' n)\le Ce^{-\gamma n}\quad\forall n.
$$
\end{LEM}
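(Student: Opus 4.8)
The plan is to prove the exponential bound $\mP(\oun > \alpha' n) \le Ce^{-\gamma n}$ by a subadditivity-plus-large-deviations argument, exactly as in the treatment of oriented percolation in \cite{Dur84}. The starting point is the superadditivity relation \eqref{subadditivity}, namely $\oum + \oumn \ge \oun$ for $n > m$, together with the distributional facts recorded just after it: $\{\overline{u}_{m,m+n} : 0 < n\} \sim \{\oun : 0 < n\}$, and more importantly $\oum$ and $\oumn$ are independent. Fixing an integer $k \ge 1$ and writing $n = qk + r$ with $0 \le r < k$, iterating \eqref{subadditivity} at the levels $0, k, 2k, \dots, qk$ gives
\[
\oun \le \overline{u}_{0,k} + \overline{u}_{k,2k} + \dots + \overline{u}_{(q-1)k, qk} + \overline{u}_{qk, n},
\]
so that $\oun$ is dominated by a sum of $q$ i.i.d.\ copies of $\overline{u}_k$ plus an error term $\overline{u}_{qk,n}$ supported on a block of length $r < k$. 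The key point is that since $\alpha = \inf_{m\ge 1}\mE[\oun/m] = \lim \mE[\oun/m]$, one can first choose $k$ large enough that $\mE[\overline{u}_k]/k < \alpha'' $ for some $\alpha'' $ strictly between $\alpha$ and $\alpha'$; this is where the strict inequality $\alpha' > \alpha$ is used.

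The next step is the large-deviation estimate for the i.i.d.\ sum. I would like to apply Cram\'er's theorem to $S_q = \sum_{i=1}^q \overline{u}_k^{(i)}$, concluding $\mP(S_q > \alpha' n) \le \mP(S_q > (\alpha'' + \epsilon) qk) \le e^{-c q}$ for suitable $c > 0$, provided the moment generating function $\mE[e^{\lambda \overline{u}_k}]$ is finite for some $\lambda > 0$. That finiteness is itself a small lemma: $\overline{u}_k \le k$ deterministically holds only after conditioning that the relevant sites are reachable, but in fact from the deterministic recursion $\overline{u}_{n+1} \le (\overline{u}_n + 1) \vee \overline{u}_{n-1}$ recorded in the excerpt one gets the crude bound $\overline{u}_k \le \max(0, \text{(initial data)}) + k$, and since the initial data $\overline{u}_0 = 0$, $\overline{u}_1 = \pm 1$ are bounded, $\overline{u}_k \le k+1$ deterministically. (One must be a little careful: the definition of $\oxn$ allows starting points $(0,y)$ or $(1,y)$ with $y \le 0$, so $\overline{u}_k$ could a priori be large and negative, but it is bounded \emph{above} by $k+1$, which is all we need, and that is exactly what the recursion gives by induction.) Hence $\overline{u}_k$ is bounded above by a constant, its MGF is finite everywhere, and Cram\'er's theorem applies with room to spare. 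Finally the error term $\overline{u}_{qk,n}$ is bounded above by $k+1$ deterministically, hence contributes a bounded amount that is absorbed by taking $n$ large; for small $n$ (say $n \le n_0$) the inequality is made trivially true by choosing $C$ large. Combining, $\mP(\oun > \alpha' n) \le e^{-cq} \le C e^{-\gamma n}$ with $\gamma = c/(2k)$, say.

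The step I expect to be the main obstacle is not any single estimate but rather bookkeeping the two-sided nature of the random variables $\overline{u}_n$ (which, unlike the cluster width $u_n$, need not be $\ge 0$ and are defined via starting points below the origin) so that the deterministic upper bound $\overline{u}_k \le k+1$ is genuinely justified from the recursion and the initial conditions $\overline{u}_0 = 0$, $\overline{u}_1 \in \{-1, +1\}$. Once that upper bound is in hand the MGF finiteness is immediate and the rest is the standard Cram\'er/subadditivity packaging; the only remaining care is to keep the chosen block length $k$ fixed (depending on $\alpha, \alpha'$) throughout, so that $C$ and $\gamma$ depend only on $\alpha$ and $\alpha'$ as claimed. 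An alternative that avoids Cram\'er entirely is to bound $\mP(S_q > \alpha' n)$ directly by the exponential Markov inequality $e^{-\lambda \alpha' n}\mE[e^{\lambda \overline{u}_k}]^q$ and optimize over $\lambda$; this is self-contained and is the route I would actually write down.
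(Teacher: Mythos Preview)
Your proposal is correct and is essentially the paper's own argument: decompose $\oun$ via \eqref{subadditivity} into i.i.d.\ blocks of a fixed length $N$ chosen so that $\mE[\overline{u}_N]/N<\alpha'$, use the deterministic upper bound $\overline{u}_N\le N$ (from the recursion $\overline{u}_{n+1}\le(\overline{u}_n+1)\vee\overline{u}_{n-1}$) to get MGF finiteness for $\theta\ge 0$, and apply the exponential Markov inequality. Your ``alternative'' route via exponential Markov is in fact exactly what the paper does; the one small ingredient you leave implicit is how to pick the good exponent $\theta_0$, which the paper handles by observing $\limsup_{\theta\downarrow 0}(\phi(\theta)-1)/\theta\le\mE[v_1]<0$, and the remainder block is absorbed into the constant $C=\max_{0\le k<N}\psi_k(\theta_0)/\phi(\theta_0)$ rather than discarded.
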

\proof
By \eqref{eqn:subadd}, we may find $N$ non-random such that $\mE[\frac{\overline{u}_N}{N}]<\alpha'$. Let $v_k=\overline{u}_{(k-1)N,kN}-N\alpha'$ (so $v_1=\overline{u}_N-N\alpha'$). Then the $v_k$ are IID and $\mE[v_k]<0$. Since $v_k\le N$, it follows that $\phi(\theta)=\mE[e^{\theta v_k}]<\infty$ for $\theta\ge 0$. Also let $\psi_k(\theta)=\mE[e^{\theta(\overline{u}_{k}-k\alpha' )}]$. As in \cite{Dur84}, 
$$
\limsup_{\theta\to 0}\frac{\phi(\theta)-1}{\theta}\le \mE[v_1]<0
$$
so we may find $\theta_0>0$ with $\phi(\theta_0)<1$. 
Let $n=mN+k<(m+1)N$, where $0\le k<N$. By \eqref{subadditivity}
$$
\overline{u}_{n}-\alpha'n
\le v_1+v_2+\dots+v_m+(\overline{u}_{mN,n}-\alpha'k)
$$
so 
$$
\mP(\overline{u}_{n}>\alpha'n)
\le \mE[e^{\theta_0(\overline{u}_{n}-n\alpha')}]\le \phi(\theta_0)^m\psi_k(\theta_0)=\phi(\theta_0)^{m+1}\frac{\psi_k(\theta_0)}{\phi(\theta_0)}\le \phi(\theta_0)^{n/N}\frac{\psi_k(\theta_0)}{\phi(\theta_0)}.
$$
This shows the lemma, with $C=\max_{0\le k<N}\psi_k(\theta_0)/\phi(\theta_0)$ and $\gamma=-\frac{1}{N}\log \phi(\theta_0)$. \qed

\medskip

Note that the above applies for every $p$, not just for $p>p_c^{\smallOTSP}$.\bigskip

\noindent \emph{Proof of Proposition \ref{prp:OPslope}.}
Fix $p\in (p_c,1)$.  By \eqref{eqn:subadd} there exists $\alpha[p]$ such that the model $(\OTSP,\cdot)$ rotated clockwise by $3\pi/4$ (and scaled by $\sqrt{2}$) has $n^{-1}u_n\ra \alpha$ almost surely on the event $\Omega_{\infty}=\{|{\bf C}_o|=\infty\}$.  We call this the rotated model -- it is exactly what we have been analyzing in this section. By the discussion following \ref{eqn:subadd}, $\alpha\ge 0$.  By Lemma \ref{lem:strictlyincreasing}, $\alpha[p]$ is strictly increasing for $p>p_c$, so $\alpha[p]\in (0,1)$ for all $p\in (p_c,1)$.  

Rescale by $1/\sqrt{2}$ and rotate this model, back by $\pi/4$ anticlockwise (i.e.~this is the model of \S\ref{sec:intro} rotated clockwise by $\pi/2$).  We call this the reflected model, as it can be considered as a reflection of the original model in the vertical axis.  Let $\sigma=\frac{1+\alpha}{1-\alpha}\in (1,\infty)$.  We wish to show that in the reflected model, $w_n/n \ra \sigma$ (on the event $\Omega_{\infty}$).  By symmetry, this implies that $v_n/n \ra 1/\sigma$. The result then follows immediately with $\rho=-\sigma$.

Let $\psi(u)$ denote the perpendicular projection of a point $u$ onto the diagonal (the line $y=x$), i.e.
\[\psi(u)=\Big(\frac{u^{[2]}+u^{[1]}}{2},\frac{u^{[2]}+u^{[1]}}{2}\Big).\]

Let $\epsilon\in (0,\alpha/2)$ and suppose that in the reflected model $w_n>(\sigma+\epsilon)n$.  Let $x'=(n,w_n)$. Then $z_1=\psi(x')$ is a point along the diagonal (in the reflected model) corresponding to a time $m_1=2z_1^{[1]}=x'^{[1]}+x'^{[2]}$ in the rotated model at which 
$$
u_{m_1}\ge \sqrt{2}|x'-\psi(x')|=\sqrt{2}\sqrt{\Big(\frac{n-w_n}{2}\Big)^2+\Big(\frac{n-w_n}{2}\Big)^2}=w_n-n
$$ 
Choose $\epsilon'=\epsilon'(\epsilon,\alpha)>0$ so that 
$$
\frac{1+\alpha}{1-\alpha}+\epsilon > \frac{1+\alpha+\epsilon'}{1-\alpha-\epsilon'}.
$$
Therefore $w_n>(\sigma+\epsilon)n\Rightarrow w_n(1-\alpha-\epsilon')>n(1+\alpha+\epsilon')
\Rightarrow (w_n-n)>(\alpha+\epsilon')(w_n+n)$. In other words, $u_{m_1}>(\alpha+\epsilon')m_1$. 
Since $\lim_{m\to\infty}\frac{u_m}{m}=\alpha$ almost surely on $\Omega_\infty$,
this inequality occurs for only finitely many $m_1$ almost surely (in the rotated model). 
We conclude that $w_n>(\sigma+\epsilon)n$ for only finitely many $n$, almost surely (in the reflected model).

Suppose now that $x'=(n,w_n)$ for some $w_n<(\sigma-\epsilon)n$, and let $z_1=\psi(x')$ and $m_1=n+w_n<(\sigma-\epsilon+1)n$ be as above.  Then $(n,w_n+k)\notin {\bf C}_o$ for each $k>0$.  Let $u=(n,\lfloor(\sigma+\epsilon)n\rfloor)$, and $z_2=\psi(u)$. 
In the rotated model, the point $z_2$ corresponds to a time $m_2=2z_2^{[1]}=u^{[2]}+u^{[1]}\ge(\sigma+\epsilon+1)n>(\sigma-\epsilon+1)n\ge m_1=x'^{[2]}+x'^{[1]}$. Let $\delta>0$ be a value we will choose later. Suppose also that $u_{m_2}\in ((\alpha-\delta\epsilon)m_2,(\alpha+\delta\epsilon)m_2)$ (which is true for all sufficiently large $n$).  In particular $u_{m_2}>(\alpha-\delta\epsilon)m_2$.  First consider the case that $w_n>n$. Since $(n,w_n+k)\notin {\bf C}_o$ in the reflected model, we must have a connection in the rotated model from a point $(m_1,j_1)$ or $(m_1+1,j_1)$ with $j_1\le \sqrt{2}|x'-z_1|$ to a point $(m_2,j_2)$ with $j_2\ge (\alpha-\delta\epsilon)m_2$.  This corresponds to
\[\bar{u}_{m_1,m_2}\ge j_2-j_1\ge(\alpha-\delta\epsilon)m_2-j_1\ge c_1n-j_1,\]
where $c_1=(\alpha-\delta\epsilon)(\sigma+\epsilon+1)$.
But
\[x'-z_1=\left(\frac{x'^{[1]}-x'^{[2]}}{2},\frac{x'^{[2]}-x'^{[1]}}{2}\right).\]
So 
\[j_1\le \sqrt{2}|x'-z_1|=|x'^{[1]}-x'^{[2]}|=w_n-n.\]
If, on the other hand, $w_n\le n$, we argue exactly the same way, except that the constraint on $j_1$ is simply that $j_1\le 0$. 
Since 
$0<m_2-m_1\le m_2< (\sigma+\epsilon+1)n+1\le (\sigma+\epsilon+2)n$
we have in either case that
\[\bar{u}_{m_1,m_2}>(m_2-m_1)\left[\frac{c_1n-(w_n-n)_+}{(\sigma+\epsilon+2)n}\right]
\ge (m_2-m_1)\left[\frac{c_1-\sigma+\epsilon+1}{\sigma+\epsilon+2}\right].
\]
A quick calculation shows that
$$
c_1-\sigma+\epsilon+1 = \frac{\epsilon}{1-\alpha}\Big[\alpha(1-\alpha)-2\delta-\delta\epsilon(1-\alpha)\Big]. 
$$
If we choose $\delta<\alpha(1-\alpha)/2$ and then $\epsilon$ sufficiently small, we obtain the inequality
$$
\bar{u}_{m_1,m_2}>c_2(m_2-m_1)
$$
for a constant $c_2>0$. 
Observe also that 
$$
m_2-m_1\ge (\sigma+\epsilon+1)n-(\sigma-\epsilon+1)n=2\epsilon n.
$$
We wish to apply the exponential bound of Lemma \ref{lem:expldecay}, but note that though $m_2$ is deterministic, $m_1$ is not. Nor is $\bar{u}_{m_1,m_2}$ independent of the environment looked at in order to determine $w_n$. However, if we carefully examine what we have found, it is the following: If $w_n<(\sigma-\epsilon)n$ and $u_{m_2}\in ((\alpha-\delta\epsilon)m_2,(\alpha+\delta\epsilon)m_2)$ then there is a $k$ with $m_2\ge k\ge 2\epsilon n$ such that $\bar{u}_{m_2-k,m_2}\ge c_2 k$. By Lemma \ref{lem:expldecay}, the probabilty of the former event is at most 
$$
Cm_2 e^{-2\gamma\epsilon n} \le 
C\Big((\sigma +\epsilon+1)n+1\Big) e^{-2\gamma\epsilon n}.
$$
This sums, so by Borel-Cantelli these conditions hold for only finitely many $n$, almost surely. Since $\lim_{m\to\infty}\frac{u_m}{m}=\alpha$, the event 
$u_{m_2}\in ((\alpha-\delta\epsilon)m_2,(\alpha+\delta\epsilon)m_2)$ occurs for all but finitely many $n$. Therefore in fact $w_n\ge (\sigma-\epsilon)n$ for large enough $n$, a.s.

We have proved that for all $\epsilon>0$ sufficiently small, $|n^{-1}w_n-\sigma|<\epsilon$ for all but finitely many $n$ almost surely, which establishes the result.\qed

The following result, as well as Lemmas \ref{lem:OPslope} and \ref{lem:dual_tail},  will follow from a renormalisation argument (as in \cite{Dur84}), which we turn to in section \ref{sec:OPblock}. 

\begin{LEM}
\label{lem:continuity}
$\alpha[p_c^{\smallOTSP}]=0$, and $\alpha$ is continuous on $[p_c^{\smallOTSP},1]$
\end{LEM}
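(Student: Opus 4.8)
The plan is to reduce the statement to one analytic fact — \emph{left-continuity of $\alpha$} — and to supply that via the block (renormalisation) construction of \S\ref{sec:OPblock}, following Durrett and Grimmett--Hiemer. First, recall what is already available: for each fixed $n$, $\mE[\oun[p]]$ is continuous in $p$, so $\alpha[p]=\inf_{n\ge1}\mE[\oun[p]]/n$ is upper-semicontinuous; by Lemma \ref{lem:strictlyincreasing}, $\alpha$ is strictly increasing on $[p_c^{\smallOTSP},1]$ (where $\alpha\ge0>-\infty$); and an increasing upper-semicontinuous function is automatically right-continuous, since its right limit $\inf_{q>p}\alpha[q]$ both dominates $\alpha[p]$ by monotonicity and is dominated by it by upper-semicontinuity. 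Hence continuity on $[p_c^{\smallOTSP},1]$ follows once we show (a) $\alpha[q]\to\alpha[p_0]$ as $q\uparrow p_0$ for every $p_0\in(p_c^{\smallOTSP},1]$, and (b) $\alpha[p_c^{\smallOTSP}]=0$ (after which right-continuity at $p_c^{\smallOTSP}$ is free).

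For (a), fix $p_0>p_c^{\smallOTSP}$ (so $\mP_{p_0}(\Omega_\infty)>0$) and $\epsilon>0$. Using the a.s.\ convergence $u_n/n\to\alpha[p_0]$ on $\Omega_\infty$, the exponential overshoot bound of Lemma \ref{lem:expldecay}, and a seeding/restart argument, build a scale-$L$ block event $A_L$, defined in terms of the $\mathbf{G}_z$ in a bounded $L$-dependent window, which is \emph{increasing} in the coupling variables $\{U_z\}$ and which has: (i) $\mP_{p_0}(A_L)\to1$ as $L\to\infty$; and (ii) when (translates of) $A_L$ are chained along a renormalised oriented lattice aligned with the growth direction of $\mathbf{C}_o$, the resulting open path from the origin advances its right edge by at least $(\alpha[p_0]-\epsilon)$ times its horizontal extent, up to an $O(1)$ loss per block. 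Because $A_L$ depends on finitely many coordinates, $\mP_q(A_L)\to\mP_{p_0}(A_L)$ as $q\uparrow p_0$; choosing $L$ large and then $q$ close to $p_0$, the block events have density close enough to $1$ that the renormalised process stochastically dominates a supercritical oriented percolation. On the positive-$\mP_q$-probability event that the origin lies in an infinite good-block cluster, (ii) forces $u_n/n\ge\alpha[p_0]-\epsilon-C/L$ eventually, and since $u_n/n\to\alpha[q]$ a.s.\ on $\Omega_\infty$ under $\mP_q$, we get $\alpha[q]\ge\alpha[p_0]-\epsilon-C/L$. Letting $q\uparrow p_0$, then $L\to\infty$, then $\epsilon\downarrow0$, and combining with upper-semicontinuity gives $\alpha[q]\to\alpha[p_0]$.

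For (b), we already know $\alpha[p_c^{\smallOTSP}]\ge0$. If $\alpha[p_c^{\smallOTSP}]>0$, then $\mP_{p_c^{\smallOTSP}}(\Omega_\infty)>0$ by Lemma \ref{lem:alphabiggerthan0}, so the block construction above applies at $p=p_c^{\smallOTSP}$, producing for large $L$ an increasing finite-range event $A_L$ with $\mP_{p_c^{\smallOTSP}}(A_L)$ close to $1$. Since $A_L$ depends on finitely many coordinates, $\mP_q(A_L)$ remains close to $1$ for some $q<p_c^{\smallOTSP}$, so the renormalised process at parameter $q$ still percolates and $\mP_q(\Omega_\infty)>0$ — contradicting $q<p_c^{\smallOTSP}=\inf\{p:\Theta_+(p)>0\}$. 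Hence $\alpha[p_c^{\smallOTSP}]=0$, and then $\lim_{q\downarrow p_c^{\smallOTSP}}\alpha[q]=\alpha[p_c^{\smallOTSP}]$ by monotonicity and upper-semicontinuity.

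The main obstacle is the construction of $A_L$ with the speed-preserving chaining property (ii): one must lay out the renormalised lattice along the growth direction, show that a single well-seeded block is good with probability tending to $1$ (here Lemma \ref{lem:expldecay} controls the overshoot and the a.s.\ slope $\alpha[p_0]$ supplies the matching lower bound, at the cost of restarting from a fresh seed on the far face), and control the cumulative $O(1)$ boundary losses so they are absorbed into the $\epsilon$ term. Everything else — monotonicity and finite range of $A_L$, the stochastic-domination comparison, and the two semicontinuity bookkeeping steps — is routine.
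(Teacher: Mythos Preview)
Your overall architecture matches the paper's: reduce to left-continuity via upper-semicontinuity plus monotonicity, then use the block construction together with the fact that a block event depends on only finitely many sites and hence has probability continuous in $p$. Part (b) is essentially identical to the paper's argument, except that your detour through Lemma~\ref{lem:alphabiggerthan0} is unnecessary: Lemma~\ref{lem:3props}(iii) already applies directly from the hypothesis $\alpha[p_c^{\smallOTSP}]>0$, so one can fix $(\alpha_0,\delta,L)$, perturb $p$ downward, and invoke Lemmas~\ref{lem:transferresult} and~\ref{lem:3props}(ii) without ever mentioning $\Omega_\infty$.

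The one place where the paper's execution is tighter than your sketch is part (a). Rather than constructing a generic $A_L$ and then arguing separately that an infinite good-block cluster forces $u_n/n\ge \alpha[p_0]-\epsilon-C/L$ (your ``speed-preserving chaining'' property, which as you note is the real obstacle --- it amounts to showing the renormalised system's upper edge grows at speed close to $1$), the paper simply reuses the parallelogram $\eta$-system of \S\ref{sec:OPblock} with $(\alpha_0,\delta,L,q)$ frozen at their $p_0$-values, and then quotes the contour argument packaged in the Note following Lemma~\ref{lem:explboundbelowalpha}. That Note says the tail bound $P(\overline{u}_n\le an)\le Ce^{-\gamma n}$ holds for \emph{any} $p$ satisfying $P(\eta(o)=0)<\varepsilon[q]$, regardless of whether $\alpha=\alpha[p]$. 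So once the block probability stays above $1-\varepsilon[q]$ at the perturbed $p$, the exponential tail estimate on $\overline{u}_n$ drops out directly, and $\alpha[p]\ge a$ follows immediately from $\alpha[p]=\lim\overline{u}_n/n$ a.s. This bypasses the need to control the speed of the renormalised cluster as a separate step --- that control is already baked into the contour argument of Lemma~\ref{lem:explboundbelowalpha}.
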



\noindent\emph{Proof of Lemma \ref{lem:OPslope}.}
As in the proof of Proposition \ref{prp:OPslope}, we have $\rho=-\frac{1+\alpha}{1-\alpha}$, where $\alpha$ arises from the model studied throughout \S\ref{appendixsection}, ie a rotation and scaling of the model $(\OTSP,\cdot)$. By Lemma \ref{lem:strictlyincreasing}, $\alpha[p]$ is strictly increasing for $p>p_c$, therefore $\rho$ is strictly decreasing. By Lemma \ref{lem:continuity}, $\alpha$ is continuous in $p$, therefore so is $\rho$. The same result shows that $\alpha[p]\downarrow 0$ when $p\downarrow p_c$, which implies that $\rho\uparrow -1$. \qed

\subsection{Block construction for $p>p_c$}
\label{sec:OPblock}
The following construction is needed in order to get exponential tail decay above the critical probability. But there will be other useful consequences as well.

There is a subtle point about the construction, that isn't emphasized in \cite{Dur84}. In the latter, $\alpha$ always appears to be $=\alpha[p]$, but there are a couple of places where one can get more out by fixing $\alpha$ and letting $p$ vary. So we will treat it as a separate parameter of the construction, throughout this section, and write $\alpha[p]$ when we mean the asymptotic slope.

Fix an $\alpha>0$, and choose $\delta$ small but with $(1-\delta)\alpha$ is rational. Then choose $L$ large depending on $p$ such that $L$ is an even integer and $(1-\delta)\alpha L$ is also an even integer.  For each $(n,m)\in \mc{L}$ let 
$$
C_{n,m}=(Ln,(1-\delta)\alpha L m), \quad R_{n,m}=C_{n,m}+[0,(1+\delta)L]\times [-(1+\frac{\delta}{2})\alpha L,(1+\frac{\delta}{2})\alpha L].
$$

Let $A_{0,0}$ be the parallelogram with vertices 
\begin{align*}
w_0=&(0,-\frac{3}{2}\delta\alpha L), \quad w_1=w_0+(1+\delta)(L,\alpha L)=((1+\delta)L,(1-\frac{\delta}{2})\alpha L)\\
v_0=&(0,-\frac{1}{2}\delta\alpha L), \quad v_1=v_0+(1+\delta)(L,\alpha L)=((1+\delta)L,(1+\frac{\delta}{2})\alpha L),
\end{align*}
and let $B_{0,0}=\{(x,-y): (x,y)\in A_{0,0}\}$.  Let $H^{\nearrow}_{0,0}$ be the event that there is an open path from left to right staying in $A_{0,0}$, and similarly for $H^{\searrow}_{0,0}$. To be consistent with what we've done before, ``left'' means first coordinate 0 or 1, while ``right'' means first coordinate $(1+\delta)L$. This should probably be adjusted, because the latter might not be an integer, but we'll ignore this (as \cite{Dur84} does).  Then define
$$
G_{0,0}=H^{\nearrow}_{0,0}\cap H^{\searrow}_{0,0}, \qquad \text{ and let }G_{n,m} \text{ be }G_{0,0} \text{ translated by }C_{n,m}.
$$
In other words, $G_{n,m}=H^{\nearrow}_{n,m}\cap H^{\searrow}_{n,m}$ where $H^{\nearrow}_{n,m}$ (resp. $H^{\searrow}_{n,m}$) is the event that there is an occupied path from left to right staying in the parallelogram $A_{n,m}$ (resp. $B_{n,m}$) with vertices 
\begin{align*}
(0,\mp\frac{3}{2}\delta\alpha L)+(Ln,(1-\delta)\alpha L m)&=(Ln,\alpha L[m(1-\delta)\mp\frac32\delta]),\\
((1+\delta) L,\pm(1-\frac{\delta}{2})\alpha L)+(Ln,(1-\delta)\alpha L m)&=(L[n+1+\delta],\alpha L[m(1-\delta)\pm(1-\frac12\delta)]),\\
(0,\mp\frac{1}{2}\delta\alpha L)+(Ln,(1-\delta)\alpha L m)&=(Ln,\alpha L[m(1-\delta)\mp\frac12\delta]),\\
((1+\delta)L,\pm(1+\frac{\delta}{2})\alpha L)+(Ln,(1-\delta)\alpha L m)&=(L[n+1+\delta],\alpha L[m(1-\delta)\pm(1+\frac12\delta)]).
\end{align*}
See Figure \ref{fig:durrett_1} for a picture of the overlaps of parallelograms when $\delta=.2$, $\alpha=.75$ and $L=10$. 
\begin{figure}%
\includegraphics[scale=.5]{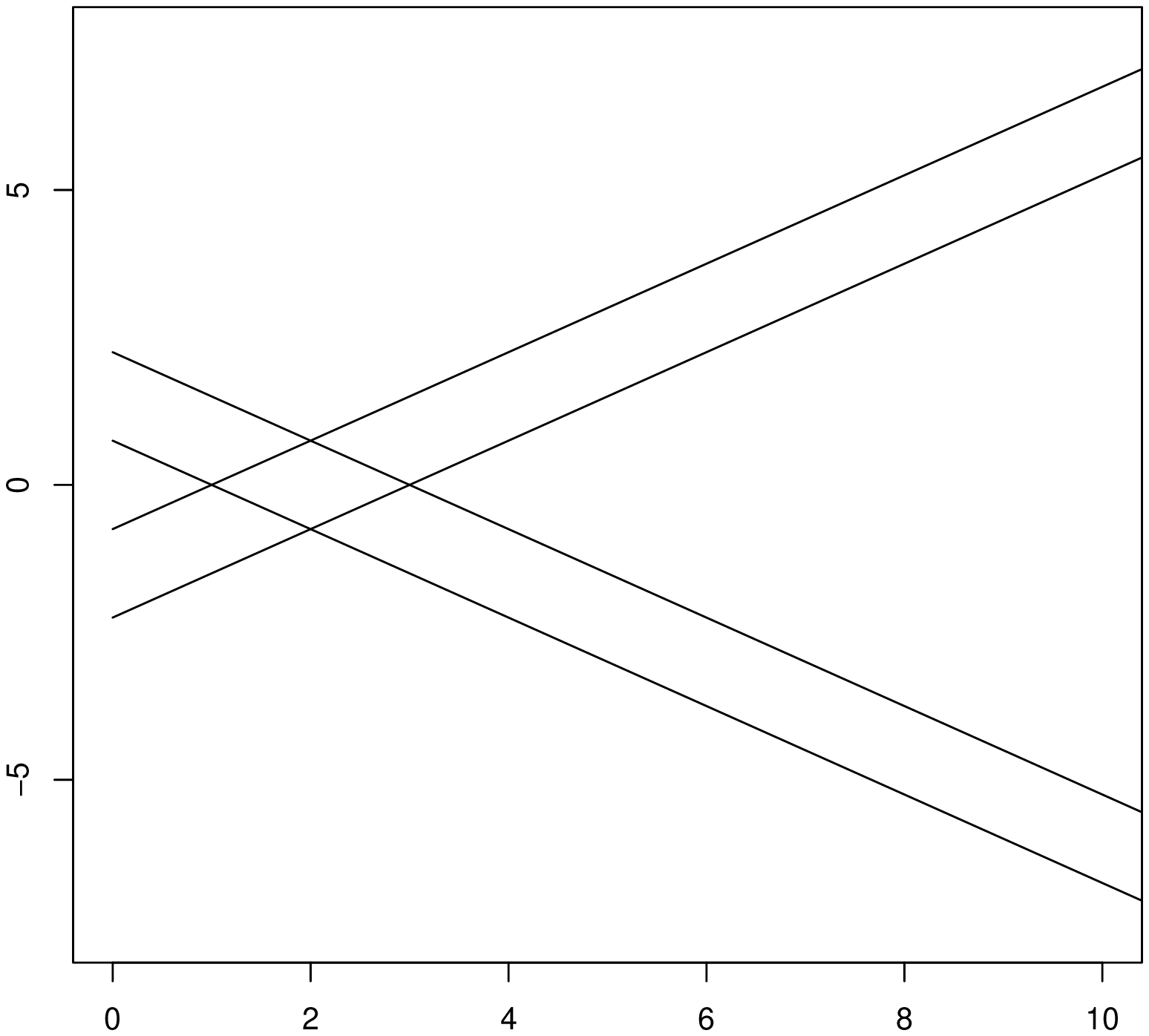}
\includegraphics[scale=.5]{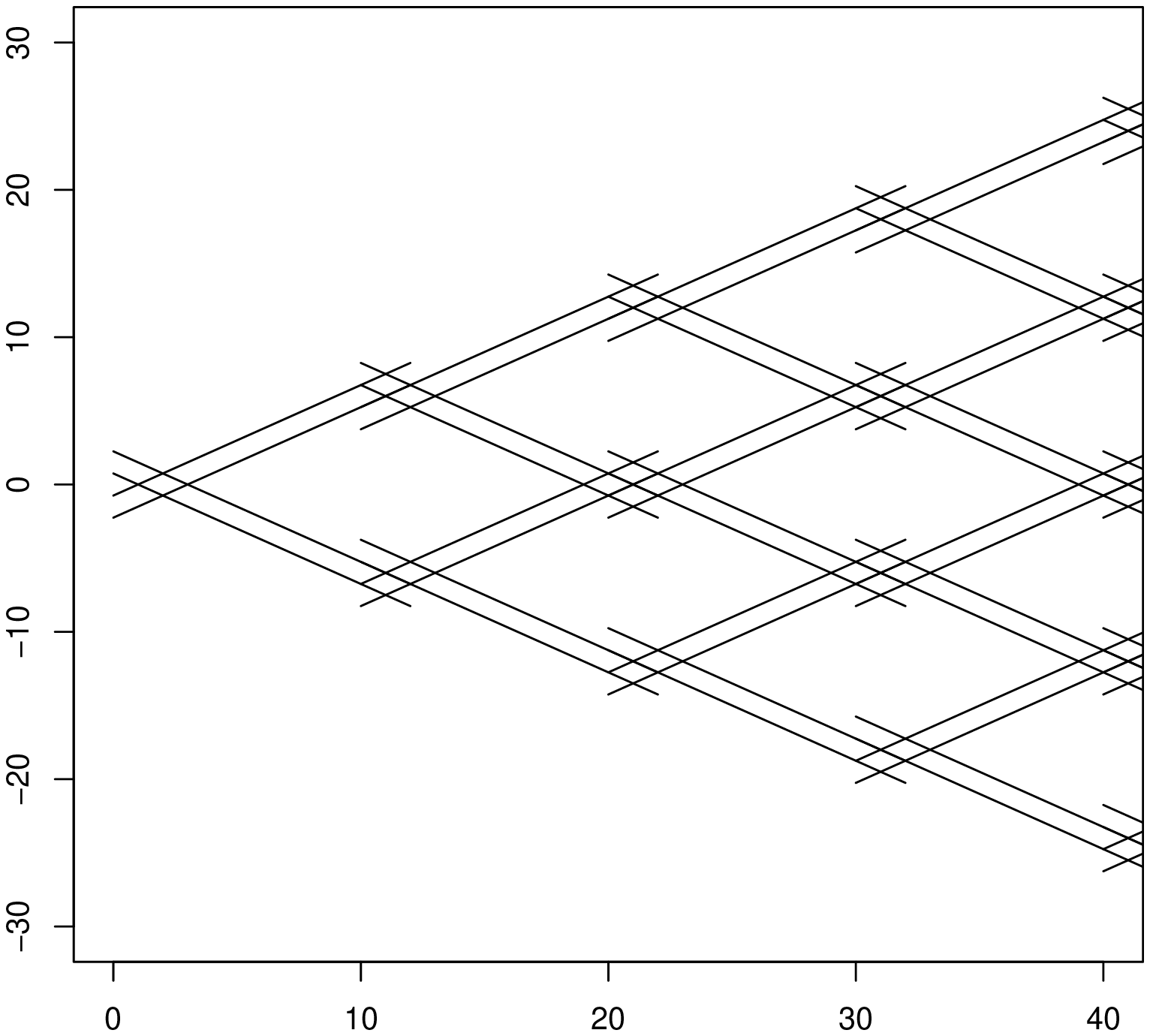}%
\caption{The construction of interlocking parallelograms for $\delta=.2$, $\alpha=.75$ and $L=10$.}%
\label{fig:durrett_1}%
\end{figure}
For example, consider the connections for the pair $(n,m)$. The entering connections are via $B_{n-1,m+1}$ (which shares a boundary segment with $B_{n,m}$) and $A_{n-1,m-1}$ (which shares a boundary segment with $A_{n,m}$).

For $z=(n,m)\in \mc{L}$ we define $\eta(z)=I_{G_{n,m}}$.  This gives a second oriented site percolation structure to $\mc{L}$, in which $z$ is {\em open} if $\eta(z)=1$. We call this the {\em  $\eta$-system}. Observe that the connections in $\mc{L}$ are now those of the square lattice rather than the triangular lattice, so percolation for the $\eta$-system means a set of open vertices which form a directed chain in the square lattice. Let $d(z,z')$ denote the graph distance between $z$ and $z'$, for the square lattice graph on $\mc{L}$. Percolation for the $\eta$-system is not IID, but we do have the following:
\begin{LEM}
\label{lem:3props}
Assume that $0<\delta<0.25$, and $0<\alpha\le 1$.
The $\eta$-system has the following properties:
\begin{itemize}
\item[(i)]  The random variables $\eta(z)$ are only $1$-dependent: if $d(z_i,z_j)>1$ for all $i,j=1, \dots, m$, $j\ne i$ then $(\eta(z_i))_{i=1}^m$ are independent. In fact, $\eta(z)$ is independent of all but 6 other vertices.
\item[(ii)] If the $\eta$-system percolates then $|\mathbf{C}_{(0,k)}|=\infty$  or $|\mathbf{C}_{(1,k)}|=\infty$ for some $k\in [-\frac{3}{2}\delta\alpha L, -\frac{1}{2}\delta\alpha L]$.
\item[(iii)] Let $\epsilon>0$, and take $p$ such that $\alpha[p]>0$. Use the $\eta$-system with $\alpha=\alpha[p]$. Then for all $L$ sufficiently large (depending on $p$), $P(\eta(z)=1)>1-\epsilon$.
\end{itemize}
\end{LEM}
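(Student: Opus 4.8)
The plan is to prove the three parts separately, since (i) and (iii) concern a single block while (ii) is global. For (i), I would note that $\eta(z)=\mathbf 1_{G_{n,m}}$ with $z=(n,m)$, and that $G_{n,m}=H^{\nearrow}_{n,m}\cap H^{\searrow}_{n,m}$ is a measurable function of the i.i.d.\ environment $\{\mathbf G_w:w\in(A_{n,m}\cup B_{n,m})\cap\mc L\}$, since each crossing event depends only on the states of the lattice sites inside the relevant parallelogram. Thus $\eta(z)$ and $\eta(z')$ are independent whenever $(A_{n,m}\cup B_{n,m})\cap(A_{n',m'}\cup B_{n',m'})=\varnothing$, and I would verify this emptiness directly from the slanted coordinates: the union has first coordinate in $[Ln,L(n+1+\delta)]$, so the intersection is empty unless $|n-n'|\le1$, and for such $(n',m')$ the vertical positions of the two translates are separated (this is where $0<\delta<\tfrac14$, hence $1-\delta>2\delta$, is used) except when $(n',m')$ is one of the four diagonal neighbours of $(n,m)$ or is $(n,m\pm2)$ --- at most six vertices. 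One-dependence, and independence of any family at pairwise graph distance $>1$, then follow immediately.

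For (ii), assume the $\eta$-system percolates and fix an infinite open directed path $\zeta_0\to\zeta_1\to\cdots$ of $\eta$-open vertices. The parallelograms are interlocked so that a step $\zeta_{i+1}=\zeta_i+(1,1)$ makes $A_{\zeta_i}$ overlap $A_{\zeta_{i+1}}$, a step $\zeta_i+(1,-1)$ makes $B_{\zeta_i}$ overlap $B_{\zeta_{i+1}}$, and at each $\zeta_i$ the regions $A_{\zeta_i}$ and $B_{\zeta_i}$ exchange sides from left to right; in each such overlap a left--right crossing of one of the parallelograms and a left--right crossing of the other enter from opposite sides, so they must meet at a lattice point --- the same planar crossing argument used in the proofs of \eqref{firstclaim} and Lemma~\ref{lem:tau}. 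Since $G_{\zeta_i}$ supplies both an $A_{\zeta_i}$- and a $B_{\zeta_i}$-crossing, I would splice the appropriate crossings of $A_{\zeta_0},B_{\zeta_0},A_{\zeta_1},\dots$ at these meeting points into a single infinite open path of the original model, whose initial site lies on the left edge of $A_{\zeta_0}$. As $A_{\zeta_0}$ is the translate of $A_{0,0}$ by $C_{\zeta_0}$, that edge consists of the sites $(0,k)+C_{\zeta_0}$ and $(1,k)+C_{\zeta_0}$ with $k\in[-\tfrac32\delta\alpha L,-\tfrac12\delta\alpha L]$, which for $\zeta_0$ the origin is exactly the set of sites in the statement; so $|\mathbf C_{(0,k)}|=\infty$ or $|\mathbf C_{(1,k)}|=\infty$ for such a $k$.

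For (iii), fix $p$ with $\alpha:=\alpha[p]>0$ and use the $\eta$-system with this $\alpha$. The model is invariant under the reflection $(n,m)\mapsto(n,-m)$, which interchanges $A_{n,m}$ and $B_{n,m}$, so $\mP(H^{\searrow}_{0,0})=\mP(H^{\nearrow}_{0,0})$ and it suffices to make $\mP\big((H^{\nearrow}_{0,0})^c\big)<\epsilon/2$ for $L$ large. Here $A_{0,0}$ is the slope-$\alpha$ strip $\{(x,y):0\le x\le(1+\delta)L,\ \alpha x-\tfrac32\delta\alpha L\le y\le\alpha x-\tfrac12\delta\alpha L\}$. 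I would run the forward cluster of the left edge of $A_{0,0}$, a segment of width $\delta\alpha L$, and show that with probability tending to $1$ as $L\to\infty$ it crosses to the right edge without leaving the parallelogram. Survival to level $(1+\delta)L$ follows from \eqref{taucriterion} together with the estimate behind Lemma~\ref{lem:alphabiggerthan0} (namely $\mP(\overline{u}_n\ge-M\ \forall n)\to1$ as $M\to\infty$, valid since $\overline{u}_n/n\to\alpha>0$ by \eqref{eqn:subadd}). For confinement I would apply Lemma~\ref{lem:expldecay} to the upper boundary, and by the reflection above to the lower one: for $x>\delta\alpha L$ the event $\{\overline{u}_x>\alpha x+\delta\alpha L\}$ has probability $\le Ce^{-\gamma x}\le Ce^{-\gamma\delta\alpha L}$ (since $\alpha x+\delta\alpha L\ge(\alpha+c)x$ whenever $x\le(1+\delta)L$, with $c=\delta\alpha/(1+\delta)>0$), while for $x\le\delta\alpha L$ it is impossible because $\overline{u}_x\le x$; summing over $x\le(1+\delta)L$ bounds the probability of escaping the parallelogram by $\le(1+\delta)L\,Ce^{-\gamma\delta\alpha L}\to0$. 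Combining the two estimates gives $\mP(G_{0,0})>1-\epsilon$, hence $\mP(\eta(z)=1)>1-\epsilon$, for all sufficiently large $L$.

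The probabilistic ingredients (Lemmas~\ref{lem:expldecay} and \ref{lem:alphabiggerthan0}, \eqref{eqn:subadd}, \eqref{taucriterion}) are already in hand, so the real work is geometric. I expect the main obstacle to be part (ii): checking that the parallelograms genuinely interlock as described and that the splicing --- especially at the vertices where the $\eta$-path changes direction --- really produces one infinite open path; and, to a lesser extent, the confinement estimate in (iii), which is where the $\delta$-fattening of the parallelogram does its work. Both are carried out in detail in \cite{Dur84}, and I would only indicate the arguments here.
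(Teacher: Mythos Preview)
Your arguments for (i) and (ii) are essentially those of the paper. For (i) the paper works with the bounding rectangle $R_{n,m}\supset A_{n,m}\cup B_{n,m}$ rather than the parallelograms themselves, but the disjointness checks are the same, and your identification of the six neighbours $(n,m\pm2)$, $(n\pm1,m\pm1)$ agrees with the paper's. For (ii) the paper makes the splicing slightly more explicit --- when $G_{n,m}\cap G_{n+1,m+1}$ occurs it is the crossing of $B_{n+1,m+1}$ that meets both the $A_{n,m}$- and $A_{n+1,m+1}$-crossings and joins them --- but this is just what you sketched.

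Part (iii) has a genuine gap in the confinement step. The upper boundary of $A_{0,0}$ is the line through $v_0=(0,-\tfrac12\delta\alpha L)$, so if you start the cluster from the full left edge (whose top is exactly $v_0$) then, after translating $v_0$ to the origin, the event ``escape above'' is $\{\overline u_x>\alpha x\}$ --- with \emph{no} additive slack. That probability does not decay, so Lemma~\ref{lem:expldecay} gives nothing. Your event $\{\overline u_x>\alpha x+\delta\alpha L\}$ would be the correct one only if you measured the upper envelope from below $w_0=-\tfrac32\delta\alpha L$; but then, by reflection, it is the lower confinement that loses all its slack. In short, running from the whole edge you cannot simultaneously have slack on both sides.

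The paper's fix is to start not from the edge but from below a point \emph{strictly interior} to it, namely $z=-0.8\,\delta\alpha L$. This buys slack $0.3\,\delta\alpha L$ above and $0.7\,\delta\alpha L$ below. The paper then handles the two sides asymmetrically: upper confinement comes from the almost-sure convergence $\overline u_n/n\to\alpha$ (so $\sup_n(\overline u_n-\alpha'' n)<\infty$ a.s.\ for any $\alpha''>\alpha$, and one takes $L$ large enough that this sup is $<0.1\,\delta\alpha L$ with high probability), while the lower side --- where the path realizing $\overline u^z$ might dip below $w_0w_1$ and then recover --- is the genuine large-deviation event, requiring end-to-end slope $\ge\alpha(1+\tfrac32\delta)/(1+\delta)>\alpha$, and this is where Lemma~\ref{lem:expldecay} is applied. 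Your symmetric scheme (Lemma~\ref{lem:expldecay} on both sides) would also work once you shrink the starting set to gain slack on both sides, but as written the argument does not go through.
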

\proof For (i) note that when $0<\delta<1$, $G_{n,m}$ depends only on sites in the rectangle 
\[R_{n,m}=[Ln,Ln+(1+\delta)L]\times [-(1+\frac{\delta}{2})\alpha L+(1-\delta)\alpha Lm,(1+\frac{\delta}{2})\alpha L+(1-\delta)\alpha Lm].\]
Now note that $R_{n,m}\cap R_{n+2,m}=\varnothing$ when $\delta<1$ (since then $(1+\delta)L+Ln<L(n+2)$.  Likewise $R_{n,m}\cap R_{n+1,m+3}=\varnothing$ when
\[(1+\frac{\delta}{2})\alpha L+(1-\delta)\alpha Lm<-(1+\frac{\delta}{2})\alpha L+(1-\delta)\alpha L(m+3),\]
which holds when $(2+\delta)\alpha L<3(1-\delta)\alpha L$, i.e.~when $\delta<\frac14=0.25$.  This implies that $R_{n,m}\cap R_{n,m+4}=\varnothing$, since also
\[(1+\frac{\delta}{2})\alpha L+(1-\delta)\alpha Lm<-(1+\frac{\delta}{2})\alpha L+(1-\delta)\alpha L(m+4).\]
Thus only 6 vertices in $\mc{L}$ may involve dependencies with a given vertex.
 
For (ii) note that if $G_{n,m}\cap G_{n+1,m+1}$ occurs then so does $H^{\nearrow}_{n,m}\cap H^{\nearrow}_{n+1,m+1}\cap  H^{\searrow}_{n+1,m+1}$.  By construction any occupied path fulfilling $H^{\searrow}_{n+1,m+1}$ must intersect any occupied paths fulfilling $H^{\nearrow}_{n,m}$ and $H^{\nearrow}_{n+1,m+1}$ respectively.  One might worry about the fact that in this model starting ``on the left'' allows two possible first coordinates, but because $\alpha\le 1$ this causes no problem (eg the starting points in $H^{\searrow}_{n+1,m+1}$ all lie above $H^{\nearrow}_{n,m}$. It follows that if the connected cluster of the origin in the $\eta$ system is infinite, then there is an infinite cluster in the original system starting from a both point in $0\times[w_0,v_0]$ and a point in $0\times[-v_0,-w_0]$.

We turn now to (iii), so take $\alpha=\alpha[p]>0$.  Let $z=-.8\delta\alpha L$ and recall that 
$$
\overline{u}^z_n=\sup\{y: \exists x\le z\text{ such that $(0,x)\to (n,y)$ or $(1,x)\to (n,y)$}\}.
$$
Since $\frac{\oun}{n}\ra \alpha$ as $n\ra \infty$, $\oun-n\frac{1+1.1\delta}{1+\delta}\alpha$ is eventually $<0$. Therefore we can pick $L$ large enough so that with probability at least $1-\frac{\varepsilon}{4}$, both
$$
\frac{\overline{u}_{(1+\delta)L}}{(1+\delta)L}>\frac{1+.9\delta}{1+\delta}\alpha
\quad\text{and}\quad
\oun\le .1\alpha \delta L + n\frac{1+1.1\delta}{1+\delta}\alpha\quad\forall n.
$$
Since $\{\oun\}_{n\ge 0}\sim \{\overline{u}^z_n-z\}_{n\ge 0}$, we have with probability at least $1-\frac{\varepsilon}{4}$ that both
$$
\frac{\overline{u}^z_{(1+\delta)L}+.8\delta \alpha L}{(1+\delta)L}>\frac{1+.9\delta}{1+\delta}\alpha
\quad\text{and}\quad
\overline{u}^z_{(1+\delta)L}+.8\delta \alpha L\le .1\alpha \delta L + n\frac{1+1.1\delta}{1+\delta}\alpha\quad\forall n.
$$
In other words, 
$$
\overline{u}^z_{(1+\delta)L}+.8\delta \alpha L>(1+.9\delta)\alpha L
\quad\text{and}\quad
\overline{u}^z_{n}\le -.7\alpha \delta L + n\frac{1+1.1\delta}{1+\delta}\alpha\quad\forall n.
$$
In particular $-.8\delta \alpha L+(1+.9\delta)\alpha L\le \overline{u}^z_{(1+\delta)L}\le -.7\alpha \delta L + (1+1.1\delta)\alpha L$, which simplifies to
\[(1+.1\delta)\alpha L\le \overline{u}^z_{(1+\delta)L}\le (1+.4\delta)\alpha L.\]
Thus with probability at least $1-\frac{\varepsilon}{4}$ there is an occupied path to $(1+\delta)L\times [(1+.1\delta)\alpha L,(1+.4\delta)\alpha L]$ from below $(0,-.8\delta\alpha L)$  that stays below the line $m= -.7\alpha \delta L + n\frac{1+1.1\delta}{1+\delta}\alpha$.  
This line has slope greater than $\alpha$ and therefore lies below the line $v_0\ra v_1$ since it passes through $((1+\delta)L,(1+.4\delta)\alpha L)$ and $(1+.4\delta)\alpha L<(1+.5\delta)\alpha L$.

We must consider the possibility that this path crosses the line from $w_0$ to $w_1$.  If it does then from the first crossing point, the path has to rise up to above $(1+.1\delta)\alpha L$.  So for $x\le (1+\delta)L$, let $H_x$ be the event that there is a crossing from below $(x,-.7\alpha \delta L + x\frac{1+1.1\delta}{1+\delta}\alpha)$ to above $((1+\delta)L,(1+.1\delta)\alpha L)$. We will show that for sufficiently large $L$,
\begin{equation}
P(\bigcup_{x\le (1+\delta)L}H_x)<\frac{\varepsilon}{8}.
\label{eqn:lowercrossing}
\end{equation}
It follows that for $L$ sufficiently large, $P(H^{\nearrow}_{0,0})\ge 1-\varepsilon/4-\varepsilon/8$. By symmetry also $P(H^{\searrow}_{0,0})\ge 1-\varepsilon/4-\varepsilon/8$, and (iii) then follows since $G_{0,0}=H^{\nearrow}_{0,0}\cap H^{\searrow}_{0,0}$.

To show \eqref{eqn:lowercrossing}, suppose $H_x$ occurs. 
The crossing path must have end-to-end slope at least 
\[\alpha'=\frac{\alpha L- (-\frac{3}{2}\alpha\delta L)}{(1+\delta)L}=\frac{\alpha(1+\frac{3}{2}\delta)}{1+\delta},\]
which is bigger than $\alpha$ and does not depend on $L$.
Let $M=\frac{1}{4}\delta\alpha L$, and change index to $n=(1+\delta)L-x$.  Then for all $L$ sufficiently large, Lemma \ref{lem:expldecay} implies that
\[
\sum_{0\le x\le (1+\delta)L-M}P(H_x)\le
\sum_{n=M}^{\infty}P(\oun>\alpha' n)\le \frac{\varepsilon}{8}.\]
For $x\ge (1+\delta)L-M$ in fact $P(H_x)=0$, because the slope required to reach above $(1+.1\delta)\alpha L$ is at least 
\[\frac{\alpha L- (1-\frac{\delta}{2})\alpha L}{M}=\frac{\frac{\delta}{2}\alpha L}{\frac{\delta}{4}\alpha L}=2>1,\]
which is impossible. Therefore \eqref{eqn:lowercrossing} holds, as required.
\qed

As noted following Lemma \ref{lem:strictlyincreasing}, $\alpha[p]>0$ when $p>p_c^{\smallOTSP}$, so the Lemma applies in that case.  \bigskip

Let $\mathbf{C}_o^\eta$ denote the cluster of the origin, for the $\eta$-system. 
The following is proved in \cite{Dur84}, in the version we require, but we record the proof for completeness, and to set notation. The actual result does not appear to be needed until we turn to proving Lemma \ref{lem:continuity}. It could be stated more generally for 1-dependent site percolation on the triangular lattice, rather than $\eta$-systems (though the constants would change, since 8 rather than 6 neighbours could have dependencies). The reason for assuming $\alpha>0$ is simply that the $\eta$-system doesn't make sense otherwise. 
\begin{LEM}
\label{lem:transferresult}
There exists $\varepsilon>0$ such that the following holds: For any $\delta, L, \alpha>0$ and any $p$ such that $P(\eta(z)=1)>1-\varepsilon$ we have $P(|\mathbf{C}_o^\eta|=\infty)>0$ (i.e.~the $\eta$-system percolates with positive probability).
\end{LEM}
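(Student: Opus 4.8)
The $\eta$-system is an oriented site percolation on the square-lattice structure of $\mc{L}$ whose occupation variables are finitely dependent (Lemma~\ref{lem:3props}(i)), and the plan is to run the Peierls contour argument of \cite{Dur84}, modified only to accommodate this dependence. The starting point is the identity
\[
\mP(|\mathbf{C}_o^\eta|=\infty)=\mP(\eta(o)=1)-\mP\big(\eta(o)=1,\ |\mathbf{C}_o^\eta|<\infty\big),
\]
valid because $\{|\mathbf{C}_o^\eta|=\infty\}\subseteq\{\eta(o)=1\}$. So it suffices to choose $\varepsilon$ so small that, whenever $\mP(\eta(z)=1)>1-\varepsilon$, the last probability is strictly below $\mP(\eta(o)=1)$; I will in fact bound it by a quantity $q(\varepsilon)$ with $q(\varepsilon)\to 0$ as $\varepsilon\to 0$ and with all implied constants independent of $\delta,L,\alpha$, which makes the resulting $\varepsilon$ universal.

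On $\{\eta(o)=1,\ |\mathbf{C}_o^\eta|<\infty\}$ the forward cluster of the origin is cut off from infinity: as in Lemma~\ref{lem:tau}, now applied to the upper and lower boundaries of the $\eta$-cluster for the square-lattice connections, finiteness of $\mathbf{C}_o^\eta$ forces these boundaries to cross, and from the crossing one reads off a self-avoiding ``dual blocking path'' $\gamma$ consisting entirely of closed vertices, which blocks every oriented open path from $o$ to infinity. This is the planar-combinatorial input of \cite{Dur84}; in particular, the number of such blocking paths with $\ell$ vertices is at most $p(\ell)\,b^{\ell}$ for a fixed polynomial $p$ and constant $b$, both independent of $\delta,L,\alpha$. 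To estimate the probability that a given $\gamma$ is all closed, note that by Lemma~\ref{lem:3props}(i) the dependency graph on $\mc{L}$ (join $z$ and $z'$ when $\eta(z),\eta(z')$ may fail to be independent) has maximum degree at most $6$, hence admits a proper vertex colouring with $7$ colours; within a colour class the rectangles $R_{n,m}$ are pairwise disjoint, so the $\eta$-values there are jointly independent (being functions of disjoint families of i.i.d.\ variables). A self-avoiding path with $\ell$ vertices contains at least $\ell/7$ vertices of one colour, whence
\[
\mP(\eta(z)=0\ \text{for all }z\in\gamma)\le\varepsilon^{\ell/7}.
\]

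A union bound over $\gamma$ then gives
\[
\mP\big(\eta(o)=1,\ |\mathbf{C}_o^\eta|<\infty\big)\le\sum_{\ell\ge 1}p(\ell)\,b^{\ell}\,\varepsilon^{\ell/7}=:q(\varepsilon).
\]
For $\varepsilon$ small enough that $b\varepsilon^{1/7}<1$ this series converges and $q(\varepsilon)\to 0$ as $\varepsilon\to 0$; fix $\varepsilon>0$ with $\varepsilon+q(\varepsilon)<1$. Then for any $\delta,L,\alpha>0$ and any $p$ with $\mP(\eta(z)=1)>1-\varepsilon$ we obtain $\mP(|\mathbf{C}_o^\eta|=\infty)\ge(1-\varepsilon)-q(\varepsilon)>0$, as required. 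The main obstacle is the first half of the second paragraph: formalising the oriented ``dual blocking path'' and proving the exponential bound $p(\ell)b^{\ell}$ on the number of blocking paths of a given length. This is model-specific and I would import it essentially verbatim from \cite{Dur84}, the only new ingredients — the square-lattice oriented structure of the $\eta$-system and its finite dependence — being already in hand from Lemma~\ref{lem:3props}. The one genuine departure from \cite{Dur84}, replacing independence by $1$-dependence, is exactly ``the version we require'' and is absorbed entirely into the $7$-colouring. (Alternatively one could dominate $\eta$ from below by a product Bernoulli field of parameter close to $1$ via a stochastic-domination theorem for finitely dependent fields, and invoke $p_c<1$ for independent oriented site percolation; but the contour argument stays closer to \cite{Dur84} and keeps these notes self-contained.)
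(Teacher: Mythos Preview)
Your proposal is essentially the same Peierls contour argument the paper runs, and it is correct. Two small differences worth noting. First, the paper works with the cluster $C_N^\eta$ started from an interval $\{0\}\times[-2N,0]$ rather than from the single point $o$: this forces the surrounding contour to have length at least $2N+4$, so one only needs the tail of the series to be small for some large $N$, whereas your direct attack on $\mathbf{C}_o^\eta$ requires the whole series $q(\varepsilon)$ to be driven below $1-\varepsilon$. Both work; the interval trick is just a mild convenience. Second, your phrase ``dual blocking path $\gamma$ consisting entirely of closed vertices'' is a slight mischaracterisation of what Durrett (and the paper) actually do: the contour $\Gamma$ is the boundary of the diamond-union $W=\bigcup_{z\in C}(z+D)$, and only the segments of type $\searrow$ or $\swarrow$ force an adjacent closed site --- this is where the extra factor $1/4$ enters, giving $m/28$ independent closed sites rather than $m/7$. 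Since you explicitly flag this step as the one to be imported verbatim from \cite{Dur84}, the discrepancy is cosmetic rather than substantive, but when you fill in the details you will want to match that formulation rather than a literal path of closed vertices.
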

\proof Let $D=\{(a,b)\in \re^2:|a|+|b|\le 1\}$ denote the unit diamond containing the origin.  For $N\ge 0$ let $
C_N^\eta=\cup_{n\ge 0}{}^\eta\xi_n^{[-2N,0]}$ be the set of sites connected to $0\times \{-2N,\dots,0\}$ in the $\eta$-system. Here ${}^\eta\xi_n^{[-2N,0]}$ is the set of sites $(n,x)$ reachable from below $o$ in the $\eta$-system. Note that since this system lives on $\mc{L}$ as a square lattice, these are the points reachable via square-lattice moves from some lattice point $(0,y)$ with $y\le 0$ (ie. we don't also need points $(1,y)$). We will freely adapt other earlier notation to the $\eta$-system, without necessarily spelling out all the definitions. 

Let $W=\cup_{z\in C_N^\eta}(z+D)$ denote the collection of diamonds containing the vertices of $C_N$. $W$ is a closed connected set.  If $|C_N|<\infty$ then let $\Gamma'_N$ be the boundary of the unbounded component of $W'=((-1,n)\times \re )-W$, oriented so that the line from $x=(-1,0)$ to $x'=(0,1)$ points northeast (i.e.~the boundary $\Gamma'_N$ typically turns clockwise). Let $\Gamma_N$ be the portion of $\Gamma'_N$ that starts at $(-1,0)$ and ends at $(-1,-2N)$. Let $m_1,m_2,m_3,m_4$ be the number of segments in $\Gamma_N$ of types $\nearrow,\nwarrow,\swarrow,\searrow$ respectively, so
\[m_1+m_4-m_2-m_3=0, \quad m_1+m_2-m_3-m_4=-2N.\]
There are at most $3^{m-1}$ contours of length $m$ since immediate reversals are not allowed, and contours of length $m$ satisfy
\[m_1+m_2+m_3+m_4=m.\]
As we traverse $\Gamma_N$, the unbounded component of $W'$ is always on one's left, and $W$ is always on one's right, so if $z\in C_N$, the orientation is consistent with a clockwise traverse of $z+D$. For segments in $\Gamma_N$ of type $\searrow$ or $\swarrow$, the fact that sites to the left are in $W'$ but sites to the right are in $W$ implies that sites to the right must be closed.  There are at least $(m_3+m_4)/2\ge \frac{m}{4}$ (since $m_3+m_4\ge m_1+m_2$) distinct such points (each such site can have a segment of both types associated with it).  At least $1/7$ of all of these are independent since each $\eta$-site depends on at most 6 others (translates by $(0,\pm2)$ or $(\pm1,\pm1)$).  
In other words, if we enumerate independent sites, each site in the list accounts for at most 7 sites in $C_N$. Hence there are at least $m/28$ sites determined by $\Gamma_N$ that are closed, independently of each other.
Since each such contour is at least length $2N+4$ ($4$ is the length of the boundary of a single diamond) we have 
\begin{align*}
P(\tau^{[0,2N]}_\eta<\infty)\le &\sum_{m=2N+4}^{\infty}\sum_{|\Gamma|= m}P(\Gamma_N=\Gamma)\le \sum_{m=2N+4}^{\infty}\sum_{|\Gamma|=m}P(\eta(o)=0)^{\frac{m}{28}}\\
\le &\sum_{m=2N+4}^{\infty}3^{m-1}P(\eta(o)=0)^{\frac{m}{28}}\le C(3P(\eta(o)=0)^{\frac{1}{28}})^{2N}
\end{align*}
(as long as $P(\eta(o)=0)<\frac12$, say).
Thus for all $\varepsilon>0$ sufficiently small, if $P(\eta(o)=0)<\varepsilon$ and $N$ is sufficiently large then $P(\tau^{[0,2N]}_\eta<\infty)<1$, i.e.~$P(\tau^{[0,2N]}_\eta=\infty)>0$, whence also $P(\tau_\eta=\infty)>0$.\qed

\begin{LEM}
\label{lem:explboundbelowalpha}
Suppose that $p>p_c^{\smallOTSP}$.
\begin{itemize}
\item[(1)] If $a<\alpha[p]$ then there are constants $C$ and $\gamma>0$ (depending on $a$ and $p$) such that 
\begin{equation}
\label{eqn:tailestimate}
P(\overline{u}_n\le an)\le C e^{-\gamma n}
\end{equation}
In fact, we may choose $C=1$ and $\gamma>0$ so that
\begin{equation}
\label{eqn:superadd}
\lim_{n\to\infty}\frac{1}{n}\log P(\overline{u}_n\le an)=\sup_{n\ge 1}\frac{1}{n}\log P(\overline{u}_n\le an)=-\gamma<0
\end{equation}
\item[(2)] There exist constants $C$, $\gamma>0$ such that $P(n\le \tau<\infty)\le Ce^{-\gamma n}$.
\end{itemize}
\end{LEM}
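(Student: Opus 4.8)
The plan is to prove (1) first and deduce (2) from it. For (1), write $P_n=\mP(\overline u_n\le an)$, and note that $\log P_n$ is superadditive: by \eqref{subadditivity}, $\overline u_{m+n}\le \overline u_m+\overline u_{m,m+n}$, and (as recorded just below \eqref{subadditivity}) $\overline u_m$ depends only on the $\mathbf{G}_{k,z}$ with $k<m$ while $\overline u_{m,m+n}$ depends only on those with $k\ge m$ and has the same law as $\overline u_n$; these two are independent, so $\{\overline u_m\le am\}\cap\{\overline u_{m,m+n}\le an\}\subseteq\{\overline u_{m+n}\le a(m+n)\}$ gives $P_{m+n}\ge P_m P_n$. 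By Fekete's lemma $\tfrac1n\log P_n\to\sup_n\tfrac1n\log P_n=:-\gamma\in[-\infty,0]$, hence $P_n\le e^{-\gamma n}$ for every $n$. So (1) reduces to producing a single bound $P_n\le Ce^{-\gamma' n}$ with $\gamma'>0$: that forces $\gamma\ge\gamma'>0$ and yields both \eqref{eqn:tailestimate} (with $C=1$) and \eqref{eqn:superadd}.

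That exponential bound comes from the block construction of \S\ref{sec:OPblock}, run with slope parameter $\alpha=\alpha[p]>0$ and with $\delta$ small, $L$ large enough that $\mP(\eta(z)=1)>1-\varepsilon$ and the $\eta$-system percolates (Lemma \ref{lem:3props}(iii), Lemma \ref{lem:transferresult}). Let $\mathbf{H}^\eta$ be the $\eta$-cluster of $\{(0,m)\in\mc{L}:m\le0\}$; the starting blocks all sit at original heights $\le0$, so good-block chains in $\mathbf{H}^\eta$ correspond to open original paths from below $o$, and $\mathbf{H}^\eta$ is infinite almost surely (the event $\{|\mathbf{H}^\eta|=\infty\}$ contains $\{|\mathbf{C}_o^\eta|=\infty\}$, of positive probability, and is invariant under the ergodic shift of the i.i.d. environment in the second coordinate), so it reaches every $\eta$-level. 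Fix $n$, set $k=\lfloor n/L\rfloor$, and let $(k,\,{}^\eta\overline u_k)$ be the highest $\eta$-site of $\mathbf{H}^\eta$ at $\eta$-level $k$; a chain of good blocks realizing this connection produces, as in the proof of Lemma \ref{lem:3props}(ii), an open original path from below $o$ which, at original first coordinate $n$ (which lies in the first-coordinate range of the parallelogram $A_{k,\,{}^\eta\overline u_k}$), has reached height at least $(1-\delta)\alpha L\,{}^\eta\overline u_k-O(\delta L)$. Hence if ${}^\eta\overline u_k\ge(1-\varepsilon')k$ then $\overline u_n\ge(1-\delta)(1-\varepsilon')\alpha\,n-O(L)>an$ for all large $n$, provided $\delta,\varepsilon'$ are chosen so small that $(1-\delta)(1-\varepsilon')\alpha[p]>a$ (possible since $a<\alpha[p]$). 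Thus, for large $n$, $\{\overline u_n\le an\}\subseteq\{{}^\eta\overline u_{\lfloor n/L\rfloor}<(1-\varepsilon')\lfloor n/L\rfloor\}$; on the latter event there is a dual contour of closed $\eta$-sites lying above $\mathbf{H}^\eta$ and separating it, at $\eta$-level $\lfloor n/L\rfloor$, from $\{m\ge(1-\varepsilon')\lfloor n/L\rfloor\}$, and such a contour has length at least of order $n/L$. The Peierls count from the proof of Lemma \ref{lem:transferresult} (each contour of length $\ell$ forces at least $\ell/28$ independent closed $\eta$-sites, with at most $3^{\ell}$ contours up to a polynomial factor) bounds its probability by $C\bigl(3\mP(\eta(o)=0)^{1/28}\bigr)^{cn/L}$, which decays exponentially once $L$, hence $\varepsilon$, is small enough that $3\mP(\eta(o)=0)^{1/28}<1$; absorbing the finitely many small $n$ gives $P_n\le Ce^{-\gamma' n}$ for all $n$.

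For (2): by Lemma \ref{lem:tau}, $\tau=\inf\{m\ge0:\underline\ell_m>\overline u_m\}$, so $\{n\le\tau<\infty\}\subseteq\bigcup_{m\ge n}\{\overline u_m<\underline\ell_m\}$. On $\{\overline u_m<\underline\ell_m\}$ one has $\overline u_m\le0$ or $\underline\ell_m\ge1$; the reflection $(n,m)\mapsto(n,-m)$ of the model maps allowed steps to allowed steps and exchanges ``below $o$'' with ``above $o$'', so $-\underline\ell_m\overset{d}{=}\overline u_m$, whence $\mP(\underline\ell_m\ge1)=\mP(\overline u_m\le-1)\le\mP(\overline u_m\le0)$ and $\mP(\overline u_m<\underline\ell_m)\le2\,\mP(\overline u_m\le0)$. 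Applying part (1) with $a=0<\alpha[p]$ gives $\mP(\overline u_m\le0)\le e^{-\gamma m}$, and summing the geometric series yields $\mP(n\le\tau<\infty)\le\sum_{m\ge n}2e^{-\gamma m}\le Ce^{-\gamma n}$.

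The super-multiplicativity/Fekete step, the reflection argument and the summation are routine; the crux — and where I expect to spend the most care — is the containment in (1): making precise that, outside an event of probability exponentially small in $n$, the block-percolation cluster $\mathbf{H}^\eta$ fills a cone of slope arbitrarily close to $\alpha[p]$ in the original coordinates, so that $\{\overline u_n\le an\}$ cannot occur for large $n$. This requires pinning down the choice of $\delta,\varepsilon,\varepsilon',L$ and verifying the contour estimate above $\mathbf{H}^\eta$.
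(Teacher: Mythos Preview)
Your plan matches the paper's: superadditivity/Fekete reduces (1) to a single exponential bound; that bound comes from a Peierls contour argument in the $\eta$-system, transferred back via the block geometry; and (2) follows from (1) with $a=0$ via Lemma~\ref{lem:tau} and reflection symmetry, exactly as you write.

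There is one genuine slip in the Peierls step. You invoke the count from the proof of Lemma~\ref{lem:transferresult}, namely ``each contour of length $\ell$ forces at least $\ell/28$ independent closed $\eta$-sites''. That bound held because the contour there was essentially closed (it begins and ends on the line $x=-1$), so the displacement equations force $m_3+m_4\ge \ell/2$ automatically. Here the contour along the upper boundary of $\mathbf{H}^\eta$ is \emph{open}: it runs from $(-1,0)$ to $(k,{}^\eta\overline u_k+1)$, with net displacement $(k+1,{}^\eta\overline u_k+1)$. Writing $\ell=k+1+j$ (with excess $j\ge 0$), the equations $m_1+m_4-m_2-m_3=k+1$ and $m_1+m_2-m_3-m_4={}^\eta\overline u_k+1$ give only $2(m_3+m_4)=\ell-{}^\eta\overline u_k-1$; if ${}^\eta\overline u_k$ were close to $k$ this would be $o(\ell)$ and your count would collapse. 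What actually drives the argument is precisely the hypothesis ${}^\eta\overline u_k\le qk$ with $q=1-\varepsilon'<1$: it yields $2(m_3+m_4)\ge(1-q)k+j$, hence at least $((1-q)k+j)/28$ independent closed $\eta$-sites. Summing over $j\ge 0$ then gives
\[
\mP\big({}^\eta\overline u_k\le qk\big)\le C\big(3\,\mP(\eta(o)=0)^{(1-q)/28}\big)^{k},
\]
which is exponentially small in $n$ once $L$ is large enough that $\mP(\eta(o)=0)<3^{-28/(1-q)}$. So your scheme goes through, but the exponent carries the factor $(1-q)$ coming from the slope deficit, and the required smallness of $\mP(\eta(o)=0)$ depends on $q$ --- not the blanket condition $3\,\mP(\eta(o)=0)^{1/28}<1$ that a direct appeal to Lemma~\ref{lem:transferresult} would suggest. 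This is exactly the ``pinning down'' you flagged at the end; the paper's proof does the same computation explicitly.
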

Note that we do not appear to actually use \eqref{eqn:superadd} in what follows.
\proof To prove (2) set $a=0$ in \eqref{eqn:superadd} so that $a<\alpha[p]$ (since $p>p_c^{\smallOTSP}$) and $P(\overline{u}_m\le 0)\le e^{-\gamma m}$ for all $m$.  Summing over $m\ge n$ we get
\[P(\cup_{m\ge n}\{\overline{u}_m\le 0\})\le Ce^{-\gamma n}, \quad \text{and }P(\cup_{m\ge n}\{\underline{\ell}_m\ge 0\})\le Ce^{-\gamma n}.\]
It follows that $P(\cap_{m\ge n}\{\underline{\ell}_m< 0<\overline{u}_m\})\ge 1-2Ce^{-\gamma n}$ and therefore
\[P(\cup_{m\ge n}\{\underline{\ell}_m\ge \overline{u}_m\})\le 2Ce^{-\gamma n}.\]
By Lemma \ref{lem:tau} we have
\begin{align*}
P(n\le \tau <\infty)=&P(\cup_{m=n}^{\infty}\{\tau=m\})\le P\Big(\cup_{m=n}^{\infty}\big(\{\xi_{m-1}\ne \varnothing\} \cap \{\underline{\ell}_m>\overline{u}_m\}\big)\Big)\\
\le &P\big(\cup_{m=n}^{\infty} \{\underline{\ell}_m\ge \overline{u}_m\}\big),
\end{align*}
as required.

Once we have \eqref{eqn:tailestimate}, the more refined statement 
\eqref{eqn:superadd} follows immediately. To see this, let $a_n=\log P(\oun\le an)$. By \eqref{subadditivity}, 
$$
P(\overline{u}_{m+n}\le a(m+n))\ge P(\oum\le am,\overline{u}_{m,m+n}\le an).
$$   
Taking logs, and using the independence and translation invariance properties derived earlier, we get that $a_{m+n}\ge a_m+a_n$. It follows that $\lim a_n/n$ exists and equals $\sup a_n/n$. Defining $-\gamma$ to equal the former, it follows from \eqref{eqn:tailestimate} that $-\gamma<0$, showing \eqref{eqn:superadd}.

It remains to prove \eqref{eqn:tailestimate}. We first prove it for the rescaled $\eta$-system.  Let $\overline{\chi}_n$ be the quantity corresponding to $\oxn$ and let $\overline{\chi}=\cup_n\overline{\chi}_n$, i.e.~$\overline{\chi}$ is the set of points reachable from below $(0,0)$ (all using square lattice moves, now).  Similarly $\overline{s}_n=\sup\overline{\chi}_n$ is the quantity corresponding to $\oun$.  In particular $\overline{s}_n>-\infty$ for all $n$ almost surely. 

Let $D=\{(a,b)\in \re^2:|a|+|b|\le 1\}$ denote the unit diamond containing the origin.  Let $W=\cup_{z\in \overline{\chi}}(z+D)$ denote the collection of diamonds containing the vertices of $\overline{\chi}$, and $\Gamma'_n$ be the boundary of the unbounded component of $((-1,n+1)\times \re )-W$, oriented so that the line from $x=(-1,0)$ to $x'=(0,1)$ points northeast.  Let $\Gamma_n$ be the portion of this boundary starting from $(-1,0)$ and ending at $(n,\overline{s}_n+1)$. Then $\Gamma_n$ is also part of the boundary of $W$.  For fixed $n$, let $m_1,m_2,m_3,m_4$ be the number of segments in $\Gamma_n$ of types $\nearrow,\nwarrow,\swarrow,\searrow$ respectively.  Since $\Gamma_n$ starts at $(-1,0)$ and ends at $(n,\overline{s}_n+1)$ we have 
\[m_1+m_4-m_2-m_3=n+1, \quad m_1+m_2-m_3-m_4=\overline{s}_n+1.\]
If the contour $\Gamma_n$ has length $n+1+k$ (note that $k\ge 0$) then also
\[m_1+m_2+m_3+m_4=n+1+k.\]
If also $\overline{s}_n\le q n$ for some $q<1$ then also $n+1+k-\overline{s}_n-1 \ge n+k-qn$ whence,
\[2(m_3+m_4)=m_1+m_2+m_3+m_4-\overline{s}_n-1\ge (1-q)n+k.\]
As before, all sites directly to the right of segments in $\Gamma_n$ of type $\searrow$ or $\swarrow$ must be closed.  There are at least $(m_3+m_4)/2$ distinct such points (each such site can have a segment of both types associated with it).   As before, at least 1/7 of all of these are independent since each $\eta$-site depends on at most 6 others.  Hence there are at least $(m_3+m_4)/14\ge \frac{(1-q)n+k}{28}$ sites determined by $\Gamma_n$ that are closed, independently of each other.

Now the first segment is $\nearrow$ by definition, and also by definition, no segment can be followed by a segment that reverses it (i.e.~$\nearrow$ cannot be followed by $\swarrow$ etc.) whence there are at most $3^{n+k}$ different contours with $n+k+1$ segments. We get
\begin{align*}
P(\overline{s}_n\le qn)\le &\sum_{k=0}^{\infty}P(|\Gamma_n|=n+k+1,\overline{s}_n\le qn)=\sum_{k=0}^{\infty}\sum_{\Gamma:|\Gamma|=n+k+1}P(\Gamma_n=\Gamma,\overline{s}_n\le qn)\\
\le &\sum_{k=0}^{\infty}\sum_{|\Gamma|=n+k+1}P\Big(\Gamma_n=\Gamma,\overline{s}_n\le qn,\#\text{ indep. closed sites assoc. with }\Gamma\ge \frac{(1-q)n+k}{28} \Big)\\
\le &\sum_{k=0}^{\infty}3^{n+k}(1-P(\eta(o)=0))^{\frac{(1-q)n+k}{28}}\le C\big[3(1-P(\eta(o)=0))^{\frac{1-q}{28}}\big]^n,
\end{align*}
when $P(\eta(o)=0)$ is sufficiently close to 1 (which can be achieved by taking $L$ large).  It follows that 
\begin{equation}
P(\overline{s}_n\le qn)\le Ce^{-\gamma n}  
\label{eqn:etasystembound}
\end{equation}
with $e^{-\gamma}=3(1-P(\eta(o)=0))^{\frac{1-q}{28}}$.  

We need to extend this result to the underlying model.  Let $\alpha=\alpha[p]$. If $a<\alpha$, choose $\delta< (\alpha-a)/\alpha[p]=1-a/\alpha$, so that $a<(1-\delta)\alpha$.  [This is the point at which we use the flexibility to take $\delta$ small.] We may also assume that $\delta<0.25$. Next choose $q<1$ so that $a<q(1-\delta)\alpha$.  Choose $L$ sufficiently large so that $\tilde p=P(\eta(z)=1)>1-3^{-28/(1-q)}$.  Then $3(1-\tilde p)^{(1-q)/28}<3\times 3^{-28/(1-q) \times (1-q)/28}=1$, which is the condition required for \eqref{eqn:etasystembound}.
  
Returning to the construction in Figure \ref{fig:durrett_1} and the rectangles $R_{n,m}$ we have that there is a connected set of sites in the $\eta$-system from below $0$ to the point $(n,\overline{s}_n)$, so there is also a connected set of sites in the original model from below $o$ to above $(L[n+1+\delta],\alpha L[\overline{s}_n(1-\delta)-(1+\frac{\delta}{2})])$.  Moreover, $\alpha L[\overline{s}_n(1-\delta)-(1+\frac{\delta}{2})]$ is a lower bound for the 2nd coordinates of this path, right through the box $R_{n,\overline{s}_n}$. 
It follows that if $m\in[Ln,L[n+1])$ we have
$$
\oum\ge \alpha L[\overline{s}_n(1-\delta)-(1+\frac{\delta}{2})].
$$
Therefore for $m\in[Ln,L[n+1])$,
\begin{align*}
P(\oum\le a m)\le &P(\alpha L[\overline{s}_{n}(1-\delta) -(1+\frac{\delta}{2})]\le a m)
\le P\Big(\overline{s}_{n}\le \frac{a m+\alpha L(1+\frac{\delta}{2})}{\alpha L(1-\delta)}\Big) \\
\le & P\Big(\overline{s}_{n}\le \frac{a(n+1)L+\alpha L(1+\frac{\delta}{2})}{\alpha L(1-\delta)}\Big)=P\Big(\overline{s}_{n}\le \frac{a(n+1)+\alpha (1+\frac{\delta}{2})}{\alpha(1-\delta)}\Big).
\end{align*}
For all $n$ sufficiently large, $q>\frac{a+\frac{1}{n}[a+\alpha(1+\frac{\delta}{2})]}{\alpha(1-\delta)}$.  Hence for all $n$ sufficiently large and all $m\in [Ln,L[n+1])$ we have
\[P(\oum\le a m)\le P\Big(\overline{s}_{n}\le \frac{a(n+1)+\alpha (1+\frac{\delta}{2})}{\alpha(1-\delta)}\Big)\le P(\overline{s}_{n}\le qn)\le Ce^{-\gamma n}\le C'e^{-\gamma' m}.\]
By adjusting $C'$ further, if necessary, we obtain \eqref{eqn:tailestimate} for all $m$.
\qed\bigskip

Note: A careful look at the proof shows that we don't quite need $\alpha=\alpha[p]$ in the above. For the $\eta$-system argument, what we've actually shown is that given any $q<1$, there exist $C[q]$, $\gamma[q]>0$, and $\varepsilon[q]>0$ such that \eqref{eqn:etasystembound} holds for any $\eta$-system satisfying $P(\eta(o)=0)<\varepsilon[q]$. This implies that \eqref{eqn:tailestimate} holds for any $p$ and $a$ provided we can find $\alpha$, $\delta$, $L$, and $q$ making $P(\eta(o)=0)<\varepsilon[q]$.

\begin{proof}[Proof of Lemma \ref{lem:dual_tail}]
Translating between the models, as we did in the proofs of Proposition \ref{prp:OPslope} and Lemma \ref{lem:OPslope}, we find that Lemma \ref{lem:dual_tail} is an immediate consequence of 
(2) of Lemma \ref{lem:explboundbelowalpha}.
\end{proof}

\begin{proof}[Proof of Lemma \ref{lem:continuity}]
We start with the statement that $\alpha[p_c^{\smallOTSP}]=0$. Let $p_0=p_c^{\smallOTSP}$ and $\alpha_0=\alpha[p_0]$. From upper semi-continuity of $\alpha$, we already have that $\alpha_0\ge 0$. So fix $\delta<0.25$ and choose $\varepsilon>0$ as in Lemma \ref{lem:transferresult}. Assume that $\alpha_0>0$. Then (iii) of Lemma \ref{lem:3props} applies, so we may choose $L$ so large that $P(\eta(o)=1)>1-\varepsilon$ (at $p_0$ and $\alpha_0$). We will fix this $L$ and $\alpha_0$, but allow $p$ to vary. There are only finitely many sites in $R_{0,0}$, so $P(\eta(o)=1)$ varies continuously with $p$. Therefore we can find $p<p_0$ for which $P(\eta(o)=1)>1-\varepsilon$ for this $p$ as well (and the given $L$ and $\alpha_0$). By Lemma \ref{lem:transferresult} this $\eta$-system percolates with positive probability. By (ii) of Lemma \ref{lem:3props}, it follows that the original model percolates, which is impossible when $p<p_c^{\smallOTSP}$. [Note that we are using here the ability to take $\alpha$ different from $\alpha[p]$, ie to vary $p$ but not $\alpha_0$. In particular, we couldn't do the last step for $p<p_0$ unless we could first show that $\alpha[p]>0$. The latter would be needed to show that this $\eta$-system makes sense and that the various Lemmas apply.]

Turning to continuity, the fact that $\alpha[p]$ is upper semi-continuous and increasing implies that it is right continuous on $[p_c^{\smallOTSP},1)$. So all we need show is left continuity at $p_0\in (p_c^{\smallOTSP},1]$. Set $\alpha_0=\alpha[p_0]>0$. What we must prove is that if $a<\alpha_0$ then there is an $\epsilon'>0$ such that $p_0>p>p_0-\epsilon'\Rightarrow \alpha[p]\ge a$. Given $0<a<\alpha_0$, choose $\delta$ and $q$ as in the proof of Lemma \ref{lem:explboundbelowalpha}. 
Then choose $L$ so that $P(\eta(o)=0)<\varepsilon[q]$ with $p_0$ and $\alpha_0$. (Refer to the note after the proof of Lemma \ref{lem:explboundbelowalpha} for the definition.)
Fix all these values, but allow $p$ to vary. Since the number of sites in this $R_{0,0}$ is finite, it follows that we can pick $\epsilon'>0$ such that $p_0-\epsilon'<p<p_0\Rightarrow P(\eta(o)=0)<\varepsilon[q]$ (now for $p$ and $\alpha_0$). This implies the bound \eqref{eqn:tailestimate}, for $C=C[q]$ and $\gamma=\gamma[q]$. But that inequality implies that $\alpha[p]\ge a$, as required.
\end{proof}

\subsection{Non-percolation at criticality}
The following is required in order to conclude that $\barc_o=\Z$ when $p=p_c$. It is essentially a result of Grimmett and Hiemer \cite{GH}, except that they carry it out for oriented bond percolation in a model where $(0,0)$ connects to $(1,-1)$, $(1,0)$, and $(1,1)$. This generalized earlier work of Bezuidenhout and Grimmett (\emph{Ann. Probab.} {\bf 18} (1990)), showing that the critical contact process dies out. \cite{GH} also show uniqueueness of infinite clusters, and transience of the rw on them, in the supercritical regime. 
\begin{LEM}
\label{lem:criticalcluster}
$P(\Omega_\infty)=0$ when $p=p_c^{\smallOTSP}$
\end{LEM}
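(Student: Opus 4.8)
I would prove this by contradiction, following the Bezuidenhout--Grimmett ``restart'' construction as adapted to oriented percolation by Grimmett and Hiemer \cite{GH}. Suppose $P(\Omega_\infty)>0$ when $p=p_c^{\smallOTSP}$; the goal is to produce an event of the block type (depending on only finitely many of the $U_z$) whose probability can be made close to $1$, so that a version of Lemma \ref{lem:transferresult} gives percolation of a renormalised $1$-dependent oriented system, and then to run the argument at a value $p<p_c^{\smallOTSP}$ by continuity, contradicting the definition of $p_c^{\smallOTSP}$. The essential difference from the supercritical block construction of Lemma \ref{lem:3props} is that, by Lemma \ref{lem:continuity}, $\alpha[p_c^{\smallOTSP}]=0$: the critical cluster does not spread linearly, so the interlocking parallelograms are unavailable and one must instead propagate the cluster through finite space--time boxes, replacing the law of large numbers $\oun/n\to\alpha$ by a ``spreading'' lemma.

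\textbf{Step 1: the propagation lemma (the hard part).} The key claim is that if $P(\Omega_\infty)>0$ then for every $\varepsilon>0$ there exist integers $N,M\ge 1$ and $K\ge 0$ and a ``seed'' $S$ consisting of roughly $K$ consecutive occupied vertices spread over two adjacent levels of $\mc{L}$ (the two-level form is forced by the fact that $(n,x)$ can be reached from level $n-1$ or from level $n-2$), such that with probability at least $1-\varepsilon$ there is an open path, using only vertices of a translate of the box $[0,N]\times[-M,M]$, from $S$ to a translated copy of $S$ lying on the forward-in-time face $\{N,N+1\}\times[-M,M]$ or on one of the two spatial faces $[0,N]\times\{M-1,M\}$, $[0,N]\times\{-M,-M+1\}$ of that box. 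The proof is the one in \cite{GH}: starting from $P(\Omega_\infty)>0$ one first passes to survival of a cluster confined to a large finite box, then uses the FKG inequality together with the fact that a large box provides many nearly independent restart attempts from partial progress in order to boost the probability of reaching a \emph{long} run of occupied sites at some level from merely positive to arbitrarily close to $1$, and finally checks that the resulting configuration contains a translate of the seed suitable for restarting in an adjacent box. This spreading/restart step is where all the difficulty lies; the changes needed to pass from oriented bond percolation with steps $(1,-1),(1,0),(1,1)$ to site percolation on $\mc{L}$ with steps $(1,1),(2,0),(1,-1)$, and to keep track of the three possible exit faces and the two-level seeds, are routine bookkeeping.

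\textbf{Step 2: renormalisation.} Given Step 1, tile the time direction $\Z_{\ge 0}$ and the space direction $\Z$ by translates of the box so that the forward and spatial faces of each box coincide with the corresponding faces of its neighbours, and declare a block-vertex \emph{open} when the propagation event of Step 1 occurs in that box. An infinite open oriented path in the block system then produces an infinite open path in the original model issued from $S$, whence $P(\Omega_\infty)>0$ for the original model. The block system is finitely dependent (each block event involves only finitely many $U_z$, hence only boundedly many neighbouring boxes), and by choosing $N,M,K$ large we may force $P(\text{block-vertex closed})<\varepsilon$; taking $\varepsilon$ below the threshold in Lemma \ref{lem:transferresult} — which, as noted in the remark preceding that lemma, holds for $1$-dependent oriented site percolation on the triangular lattice with only a change of constants — the block system percolates with positive probability.

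\textbf{Step 3: continuity and conclusion.} Since each block event depends on finitely many $U_z$, the map $p\mapsto P(\text{block-vertex open at }p)$ is continuous, so there is $p<p_c^{\smallOTSP}$ with $P(\text{block-vertex closed at }p)<\varepsilon$ as well; then the block system at $p$ percolates, so the original model at $p$ satisfies $P(\Omega_\infty)>0$, i.e.\ $\Theta_+(p)>0$. This contradicts $p_c^{\smallOTSP}=\inf\{p:\Theta_+(p)>0\}$ together with monotonicity of $\Theta_+$. Hence $P(\Omega_\infty)=0$ at $p=p_c^{\smallOTSP}$. The only genuine obstacle is Step 1; Steps 2 and 3 are the same bookkeeping already used in this appendix to deduce percolation results from Lemma \ref{lem:transferresult}.
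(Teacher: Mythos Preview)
Your proposal is correct and follows essentially the same approach as the paper. The paper's own proof is only a sketch: assume $P(\Omega_\infty)>0$, carry out a block construction (of the Bezuidenhout--Grimmett/Grimmett--Hiemer type) based on survival rather than on $\alpha>0$, invoke Lemma \ref{lem:transferresult} for the resulting $1$-dependent system, and then use continuity in $p$ of the finite-box block event to push the percolation down to some $p'<p_c^{\smallOTSP}$, a contradiction. Your Steps 1--3 flesh out exactly this outline, with Step 1 (the propagation/restart lemma) correctly identified as the only substantive ingredient not already proved in the appendix.
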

\begin{proof}
The idea is to do a block construction like that described in the previous subsection. But to do it based on the assumption that $P(\Omega_\infty)>0$, rather than $\alpha>0$. This means that we are comparing the original percolation model with percolation across large blocks. The latter is essentially a 1-dependent percolation model, and Lemma \ref{lem:transferresult} applies, showing that if (for a certain $\epsilon$, the probability of percolation across a single block is $P(\eta(z)=1)>1-\epsilon$, then the 1-dependent model percolates. 

Now one must prove a result like Lemma \ref{lem:3props} that says: If $p$ has $P(\Omega_\infty)>0$ then one can choose the size of the blocks large enough to get $P(\eta(z)=1)>1-\epsilon$. But for those fixed block sizes, we are only looking at finitely many sites, so $P(\eta(z)=1)$ is continuous in $p$. Therefore it is still $>1-\epsilon$ if we move a small enough. In other words, we get it $>1-\epsilon$ for another value $p'<p$. That means the 1-dependent model still percolates, and therefore the original system percolates at this value of $p'$. Therefore  we must have $p>p_c$. In other words, the argument shows that $P(\Omega_\infty)>0\Rightarrow p>p_c$.
\end{proof}

 \section*{Acknowledgements}
Holmes's research is supported in part by the Marsden fund, administered by RSNZ. Salisbury's research is supported in part by NSERC. Both authors acknowledge the hospitality of the Fields Institute, where part of this research was conducted.

\bibliographystyle{plain}

\end{document}